\newtheorem{theorem}{Theorem}
\newtheorem{lemma}{Lemma}
\newtheorem{example}{Example}
\newenvironment{theoremproof}[1]{\par\noindent\textbf{Proof of Theorem}\space\textbf{#1.}\!}{\hfill $\Box$\mybreak\noindent}
\def\cadre{$$\vcenter\bgroup\advance\hsize by -2em\noindent
	\refstepcounter{equation}(\theequation)~\ignorespaces}
\def\endcadre{\egroup\eqno$$\global\@ignoretrue}
\newcommand{\mybreak} {\par\vspace{2mm}\noindent}
\def\imod#1{\allowbreak\mkern10mu({\operator@font mod}\,\,#1)}
\newcommand{\comment}[1]{}
\newcommand{\kn}[2] {#1^{\underline{#2}}}
\newcommand{\pr} {{\rm Pr}}
\newcommand{\pra}[1] {\pr\left\{#1\right\}}
\newcommand{\E} {\mathbb{E}}
\newcommand{\R} {\mathbb{R}}
\newcommand{\Ea}[1] {\E\left(#1\right)}
\title{Second-order moments of the size of randomly induced subgraphs of given order} 
\author{AA}
\date{}
\author{Nicola Apollonio\footnote{Istituto per le Applicazioni del
		Calcolo, M. Picone, Via dei Taurini 19, 00185 Roma, Italy.
		\texttt{nicola.apollonio@cnr.it}}
}
\begin{document}
\pagestyle{plain}
\maketitle	
	\begin{abstract}
	For a graph $G$ and a positive integer $c$, let $M_c(G)$ be the size of a subgraph of $G$ induced by a randomly sampled subset of $c$ vertices. Second-order moments of $M_c(G)$ encode part of the structure of $G$. We use this fact, coupled to classical moment inequalities, to prove graph theoretical results, to give combinatorial identities, to bound the size of the $c$-densest subgraph from below and the size of the $c$-sparsest subgraph from above, and to provide bounds for approximate enumeration of trivial subgraphs. 	
\end{abstract}
\mybreak
{\small\textbf{Keywords}: Induced subgraph sizes, tail inequalities, trivial subgraphs, densest and sparsest subgraph, variance inequalities.}

\section{Introduction}\label{sec:intro}
Consider the following three seemingly unrelated Graph Theoretical results:
\begin{theorem}[\cite{hararyetal}, \cite{bosak}, \cite{siran}]\label{thm:bh} Let $G$ be a graph on $n$ vertices and let $c$ be a positive integer such that $2\leq c\leq n-2$. If all the induced subgraphs of $G$ of order $c$ have the same size, then $G$ is either complete or co-complete.
\end{theorem}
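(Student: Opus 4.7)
The plan is to argue by downward induction on $c$: each time the hypothesis gives a common size $m$ at order $c$, I will derive that every $(c-1)$-subset of $G$ also has a common size, and then iterate. The base case is $c = 2$, where the hypothesis says that every pair of vertices induces the same number of edges, which must be $0$ or $1$; that is, $G$ is either complete or co-complete.

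For the inductive step, assume $c \geq 3$ and $|E(G[S])| = m$ for every $c$-subset $S$. Comparing two $c$-subsets that differ in a single vertex, $X \cup \{u\}$ and $X \cup \{v\}$ with $|X| = c - 1$ and $u, v \notin X$, the elementary identity
\[
|E(G[X \cup \{u\}])| - |E(G[X \cup \{v\}])| = d_X(u) - d_X(v)
\]
forces $d_X(u) = d_X(v)$. Hence for each $(c-1)$-subset $X$, the map $u \mapsto d_X(u)$ is constant on $V(G) \setminus X$; denote its value by $g(X)$.

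Next I would establish that $G$ is regular. Fix two vertices $u, v$ and sum $d_X(u) = d_X(v)$ over all $(c-1)$-subsets $X \subseteq V(G) \setminus \{u, v\}$. A standard double count yields
\[
\binom{n-3}{c-2}\bigl(d(u) - [uv \in E(G)]\bigr) = \binom{n-3}{c-2}\bigl(d(v) - [uv \in E(G)]\bigr),
\]
and since $c \leq n-1$ makes $\binom{n-3}{c-2}$ positive, we get $d(u) = d(v)$, so $G$ is $r$-regular for some $r$. Now, for an arbitrary $(c-1)$-subset $X$, double counting the edges between $X$ and $V(G) \setminus X$ gives $(n-c+1)\, g(X)$ on one side and $r(c-1) - 2|E(G[X])|$ on the other. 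Since $|E(G[X])| = m - g(X)$ by the definition of $g$, substituting yields
\[
(n - c - 1)\, g(X) = r(c-1) - 2m.
\]
Here is where the hypothesis $c \leq n - 2$ is essential, since it makes the coefficient $n - c - 1$ positive; dividing through forces $g(X)$, and therefore $|E(G[X])|$, to be constant in $X$. The inductive hypothesis then applies at order $c - 1 \in \{2, \ldots, n - 3\}$ and finishes the proof.

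The main obstacle, as I see it, is not conceptual but bookkeeping: both divisors $\binom{n-3}{c-2}$ and $n - c - 1$ must be strictly positive for the argument to close, and it is exactly the two-sided bound $2 \leq c \leq n - 2$ that supplies this. These same constraints are what make the theorem tight, as $c = n-1$ collapses to plain regularity (allowing cycles, say) and $c = n$ is vacuous.
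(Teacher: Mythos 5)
Your argument is correct: the exchange identity $|E(G[X\cup\{u\}])|-|E(G[X\cup\{v\}])|=d_X(u)-d_X(v)$, the double count $\binom{n-3}{c-2}\bigl(d(u)-[uv\in E(G)]\bigr)=\binom{n-3}{c-2}\bigl(d(v)-[uv\in E(G)]\bigr)$, and the cut count $(n-c+1)g(X)=r(c-1)-2|E(G[X])|$ with $|E(G[X])|=m-g(X)$ all check out, and the two places where $2\le c\le n-2$ enters (positivity of $\binom{n-3}{c-2}$ and of $n-c-1$) are exactly where you flag them; note only that your ``downward induction'' is really ordinary induction on $c$ with base $c=2$, reducing order $c$ to order $c-1$, which is fine since $2\le c-1\le n-3$. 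This is, however, a genuinely different route from the paper's. The paper also begins with regularity, but obtains it by observing that any induced subgraph $H$ on $c+1$ vertices has all $H-v$ of equal size, hence is itself regular; it then avoids any induction on $c$ by reading the hypothesis as ``the variance of $M_c(G)$ is zero'', i.e.\ $2S_2=S_1(S_1-1)$, and plugging the regular-case identity $\sum_v d(v)(d(v)-1)=4m^2/n-2m$ of Lemma \ref{lem:degree} into the moment formula \eqref{eq:bin_mom_2}; this yields a quadratic equation in $m$ whose only roots are $m=0$ and $m=\binom{n}{2}$, i.e.\ co-complete and complete graphs. Your proof is elementary and self-contained (essentially the classical counting argument of the cited original papers) and makes transparent why the statement degenerates at $c=n-1$; the paper's proof instead showcases its own machinery, making the theorem an almost immediate corollary of the explicit second-order moments of $M_c(G)$, at the cost of invoking Theorem \ref{thm:main} and Lemma \ref{lem:degree}.
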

Here co-complete means the complement of a complete graph.
\begin{theorem}\label{thm:fre_1} Let $G$ be a graph of order $n$ and positive size $m$. If $c$ is an integer number such that $2\leq c\leq n-1$, then $G$ has a subgraph of order $c$ whose size is at least 
	$$\frac{c-2}{n-2}\left\{\frac{n-c}{n-3}\left(\sum_{v\in V(G)}\frac{d(v)(d(v)-1)}{m}\right)+\left(1-\frac{n-c}{n-3}\right)(m-1)\right\}+1.$$ Furthermore, the bound is attained precisely when
	\begin{itemize}
		\item[--] $G$ is any graph with $m\geq 1$ if $c=2$;
		\item[--] $G$ is either a complete bipartite graph or $G$ consists of $t$ disjoint edges, $1\leq t\leq \lfloor n/2 \rfloor$ if $c=3$;
		\item[--] $G$ is a complete graph, or $G$ is a star, or $G$ has exactly one edge
		if $4\leq c\leq n-1$.
	\end{itemize}  
	If $c=n$, then the bound trivially says that $G$ has size $m$.
\end{theorem}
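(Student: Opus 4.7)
The plan is to apply the standard second-moment inequality $\max M_c(G) \geq \E[M_c(G)^2]/\E[M_c(G)]$, valid for any nonnegative random variable with positive mean; here $m \geq 1$ guarantees $\E[M_c(G)] > 0$. It then suffices to show that the right-hand side equals the stated bound and to analyze when the inequality is tight. Throughout, $M_c(G)$ is the size of the subgraph induced by a uniformly random $c$-subset of $V(G)$, as defined in the abstract.

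I would first compute $\E[M_c(G)]$ by expanding $M_c(G) = \sum_{e \in E(G)} X_e$, where $X_e$ is the indicator that both endpoints of the edge $e$ fall in the random $c$-subset. Since $\pra{X_e = 1} = \binom{n-2}{c-2}/\binom{n}{c} = c(c-1)/(n(n-1))$, this gives $\E[M_c(G)] = mc(c-1)/(n(n-1))$.

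For $\E[M_c(G)^2]$, I would expand the square and partition the ordered pairs $(e,f)$ according to whether $e = f$, $e$ and $f$ are distinct but share a vertex, or $e$ and $f$ are vertex-disjoint. The three corresponding joint probabilities are $\binom{n-2}{c-2}/\binom{n}{c}$, $\binom{n-3}{c-3}/\binom{n}{c}$, and $\binom{n-4}{c-4}/\binom{n}{c}$. The number of ordered pairs of distinct edges sharing a vertex equals $\sum_{v \in V(G)} d(v)(d(v)-1)$, while the total number of ordered pairs of distinct edges is $m(m-1)$. Taking the ratio $\E[M_c(G)^2]/\E[M_c(G)]$ and simplifying with $\binom{n-3}{c-3}/\binom{n-2}{c-2} = (c-2)/(n-2)$ and $\binom{n-4}{c-4}/\binom{n-2}{c-2} = (c-2)(c-3)/((n-2)(n-3))$, together with the identity $1 - (n-c)/(n-3) = (c-3)/(n-3)$, I recover exactly the expression in the theorem.

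The main obstacle is the equality analysis. The inequality $\E[X^2] \leq (\max X) \cdot \E[X]$ is tight if and only if $X(\max X - X) = 0$ almost surely, i.e.\ $M_c(G)$ takes values only in $\{0, M\}$ for some constant $M$. The constant-$M_c(G)$ subcase (where the value $0$ is not attained) is settled by Theorem \ref{thm:bh}, which forces $G$ to be complete or co-complete, leaving only $K_n$ under $m \geq 1$. The genuinely two-valued cases require combinatorial classification: $c = 2$ is automatic; for $c = 3$, the value $M = 1$ rules out two incident edges and so forces a disjoint-edges configuration, whereas $M = 2$ combined with triangle-freeness forces every third vertex to be adjacent to exactly one endpoint of each edge, which is the complete-bipartite condition; and for $c \geq 4$ an analogous two-valued analysis reduces the possibilities to stars and the single-edge graph. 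Matching this combinatorial classification to the precise list in the theorem statement is the delicate part of the argument; the bound itself falls out directly from the second-moment computation.
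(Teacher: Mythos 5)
Your derivation of the bound itself is correct and is in substance the paper's own argument: the inequality $u(c)\geq \E(M_c^2)/\E(M_c)$ is exactly the second-order Fr\'echet inequality $2S_2(c)/S_1(c)+1\leq u(c)$ of Theorem \ref{thm:listin}(i), since $\E(M_c^2)/\E(M_c)=\E(M_c(M_c-1))/\E(M_c)+1$, and your indicator decomposition with the classification of edge pairs (equal, sharing a vertex, disjoint) is the computation of Theorem \ref{thm:main} via Lemma \ref{lem:frechet}. Your equality criterion, namely that the support of $M_c$ be contained in $\{0,u(c)\}$, also matches the tightness statement in Theorem \ref{thm:listin}(i).

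The genuine gap is the equality characterization, which is where the actual difficulty of the theorem lies and which your proposal only asserts. For $c\geq 4$, the claim that a graph with $R_c(G)=\{0,u\}$ must be a star or a single edge plus isolated vertices is not an ``analogous two-valued analysis'' that falls out of the $c=3$ case: it amounts to classifying the graphs whose induced $c$-subgraphs realize exactly two sizes, and the paper does not prove this from scratch but imports the Axenovich--Balogh result (Theorem \ref{thm:idue}), which moreover only covers $4\leq c\leq n-4$; the remaining values $c\in\{n-3,n-2,n-1\}$ require the separate ad hoc argument given in the paper's proof, which your sketch does not address at all. In addition, your disposal of the constant subcase via Theorem \ref{thm:bh} is out of range at $c=n-1$, since that theorem requires $2\leq c\leq n-2$; constancy of $M_{n-1}$ only forces $G$ to be regular, so this subcase cannot be reduced to ``only $K_n$'' by that route and needs its own treatment. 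Finally, in the $c=3$ case you take triangle-freeness for granted: excluding $u(3)=3$ when $\ell(3)=0$ is exactly the first step of Lemma \ref{lem:maxdegreetrian} and requires an argument. Until these classification steps are supplied (or properly cited), the ``precisely when'' part of the statement is not proved.
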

Here $d(v)$ denotes the degree of vertex $v\in V(G)$.
\begin{theorem}[\cite{linlin}, \cite{prodtichy}]\label{thm:count_stable} Every tree of order $n$ has at most $1+2^{n-1}$ independent sets. Furthermore, the bound is attained precisely when the tree is a star on $n-1$ leaves. 
\end{theorem}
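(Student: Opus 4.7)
The plan is to prove the inequality by induction on $n$, using the classical leaf-deletion recursion for counting independent sets. Denote by $i(H)$ the number of independent sets of a graph $H$ (including the empty set). The base cases $n=1$ and $n=2$ can be checked directly: a single vertex gives $i(T)=2=1+2^0$, and a single edge gives $i(T)=3=1+2^1$; both are stars on $n-1$ leaves, consistent with the extremal claim.

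For the inductive step with $n\ge 3$, I pick any leaf $v$ of $T$, let $u$ be its unique neighbor, and split the independent sets $S\subseteq V(T)$ according to whether $v\in S$. Sets avoiding $v$ are exactly the independent sets of $T-v$; sets containing $v$ must avoid $u$, and hence correspond bijectively to the independent sets of $T-\{u,v\}$. This gives
\[
i(T) \;=\; i(T-v) + i(T-\{u,v\}).
\]
The first summand is controlled by the induction hypothesis applied to the tree $T-v$ on $n-1$ vertices, yielding $i(T-v)\le 1+2^{n-2}$. For the second summand I use the trivial fact that any graph on $k$ vertices has at most $2^k$ independent sets (every subset is a candidate, with equality iff the graph is edgeless); applied to the forest $T-\{u,v\}$ on $n-2$ vertices this gives $i(T-\{u,v\})\le 2^{n-2}$. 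Summing yields $i(T)\le 1+2^{n-1}$.

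For the equality analysis, suppose $i(T)=1+2^{n-1}$; then both bounds above must be tight. Tightness of the second forces $T-\{u,v\}$ to be edgeless, so every edge of $T$ is incident to $u$ or $v$; since $v$ is a leaf, only $uv$ is incident to $v$, and hence every other edge is incident to $u$, making $T$ a star centered at $u$. Conversely, a direct count confirms that the star on $n-1$ leaves attains the bound: the $2^{n-1}$ subsets of the leaf set are all independent, and the only additional independent set is the singleton containing the center. I expect the equality case to be the main delicacy: the numerical bound falls out of the recursion almost immediately, but pinning down the star as the unique extremizer requires extracting structural information from both tightness conditions and noting that the second one already forces the star shape — the first tightness then follows automatically, keeping the induction self-contained.
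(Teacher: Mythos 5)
Your proof is correct, but it follows a genuinely different route from the paper. You use the classical leaf-deletion recursion $i(T)=i(T-v)+i(T-\{u,v\})$ together with induction on $n$ and the trivial bound $i(H)\leq 2^{k}$ for a graph on $k$ vertices, extracting the extremal characterization from tightness of the second summand (which forces $T-\{u,v\}$ to be edgeless and hence $T$ to be a star centered at $u$). The paper instead derives the theorem as an application of its second-order moment machinery: it computes $S_1(c)$ and $S_2(c)$ for $M_c(T)$ via Theorem \ref{thm:main}, uses the Chung--Erd\H{o}s inequality to bound $\pra{M_c=0}$, and bounds the ratio $2S_2/S_1$ by maximizing $D_2=\sum_v d(v)(d(v)-1)$ over trees (the maximum $(n-1)(n-2)$ being attained by stars); this yields the per-cardinality bound $i(c,0)\leq {n-1\choose c-1}$, and summing over $c$ gives $1+2^{n-1}$. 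What each approach buys: yours is shorter, fully elementary and self-contained, and its equality analysis is airtight within the induction; the paper's argument is less elementary but illustrates the paper's main theme, and it delivers the finer per-size information $i(c,0)\leq{n-1\choose c-1}$ for every $c$, a refinement your induction does not directly produce, and one that transfers to other counts of trivial subgraphs via the same tail-inequality template.
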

Here the empty subset of vertices of a tree is counted as an independent set.
\mybreak
The common thread that runs across the results above is the second order moment structure of the random variable $M_c(G)$, defined as the size of a subgraph of $G$ induced by a randomly sampled set of $c$ vertices. Remark that we are drawing vertices without replacement. A formula for the first two moments of $M_c(G)$, which implies a linear-time algorithm for calculating these moments, was provided in \cite{our}. The formula was originally motivated by the need to assess whether the observed edge-density of the subgraphs induced by the communities of a large network were statistically significant compared to their expected values under a suitable null model. In this paper we give a different presentation of the formula. The random variable $M_c(G)$ encodes a variety of information about the graph $G$. For instance, the support $R_c(G)$ of $M_c(G)$, namely, the set of values attained with positive probability by $M_c(G)$, is the set of different sizes an induced subgraph of $G$ of order $c$ can have. The related set $\cup_c R_c(G)$, called the \emph{range of subgraph size of $G$} in \cite{cfm}, was first considered by Erd\H{o}s and McKay (see \cite{alonrtal}), who raised a conjecture on the maximum width of an interval of the form $\{0,1,\ldots,u\}$ contained in the range of subgraph size of $G$. They also provided a bound on $u$ in terms of the maximum number of induced \emph{trivial} subgraphs of $G$. Recall that an induced subgraph is \emph{trivial} if it is either complete or co-complete. The bound was considerably improved in \cite{alonrtal}, and the conjecture was solved for random graphs in \cite{cfm}. In the light of this discussion, Theorem \ref{thm:bh} characterizes those graphs $G$ such that $R_c(G)$ is the singleton $\{u(c)\}$, namely, those graphs $G$ such that $M_c(G)$ is constant almost surely. Since a random variable is constant almost surely if and only if its variance is zero, it follows that Theorem \ref{thm:bh} actually characterizes those graphs $G$ such that the variance of $M_c(G)$ is zero. Theorem \ref{thm:bh} is considerably extended in \cite{idue} using tools from extremal graph theory. Theorem \ref{thm:fre_1} can also be viewed as a statement about $R_c(G)$: it is a statement about the maximum $u(c)$ of $R_c(G)$ in fact. Such a maximum is known as the size of the $c$-densest subgraph \cite{kp}. Since by the second order Fréchet inequality (see \cite{hoppeseneta} and Section \ref{sec:prel}), provided that $M_c(G)$ is not almost surely zero and with $\Ea{\cdot}$ denoting expectation, it holds that
$$\frac{\Ea{M_c(G)(M_c(G)-1)}}{\Ea{M_c(G)}}+1\leq u(c),$$
Theorem \ref{thm:fre_1} follows rather straightforwardly by such an inequality once we have computed the expectations occurring therein---we accomplish this task in Theorem \ref{thm:main}---. Actually, when the minimum $\ell(c)$ of $M_c(G)$ in $R_c(G)$ is non zero, it can be shown (see Theorem \ref{thm:listin}) that Fréchet second order inequality is strengthened by the inequality
\begin{equation}\label{eq:bada}
\sigma^2_c(G)\leq \left\{ u(c)-\Ea{M_c(G)}\right\}\left\{\Ea{M_c(G)}-\ell(c)\right\}
\end{equation} 
due to Bhatia and Davis \cite{bada} for a discrete finitely supported random variable. Here $\sigma^2_c(G)$ denotes the variance of $M_c(G)$. We have learned of this inequality in \cite{itredibada} which also seems to be the first place where the inequality emerged within the Graph Theoretical community. While the assertion that Bathia-Davis inequality strengthens the  second order Fréchet inequality certainly needs a proof (see Theorem \ref{thm:listin}), it is otherwise apparent that the former sharpens the latter because it uses more information about the support of $M_c(G)$. Notice indeed that $\ell(c)=0$ if and only if the independence number $\alpha(G)$ of $G$ is at least $c$. In general, 
\begin{equation}\label{eq:fund_ell}
	c-\alpha(G)\leq \ell(c)
\end{equation}
because, if $c\geq \alpha(G)$, then any set $S$ of $c$ vertices induces at least $c-\alpha(G)$ edges: if the number $h$ of edges induced by $S$ were less than $c-\alpha(G)$, then by deleting at most $h$ vertices from $S$ we would be left with a stable set $S'$ of cardinality at least $\alpha(G)+1$. Therefore, the bound given in Theorem \ref{thm:fre_1} can be sharpened for all those $c$ such that $c\geq \alpha(G)$. 
\mybreak
Besides the support $R_c(G)$ of $M_c(G)$ also the sets $T(k)=\{r\in R_c(G)\ |\ r\geq k\}$, $k\in \mathbb{N}\cup\{0\}$, are quite informative. These sets are called 
the \emph{tails} of $M_c(G)$. For instance, if $G$ is non bipartite, $\pra{M_c(G)=\kn{c}{2}/2}$
is the fraction of cliques of $G$ on $c$ vertices and $\left( n \atop c\right)\pra{M_c(G)=\kn{c}{2}/2}$ is their count. Clearly, $\pra{M_c(G)=\kn{c}{2}/2}=\pra{M_c\in T(\kn{c}{2}/2)}$. The same remark applies to the fraction of subtrees of order $c$ of a given tree or even to the balanced bicliques of order $c$ in a bipartite graph. Likewise, $\pra{M_c(G)=0}$ is the fraction of stable sets and $\pra{M_c(G)=0}=1-\pra{M_c(G)\in T(1)}$. Since there are several known upper bounds on the tail probability $\pra{M_c(G)\in T(k)}$, $k\in \mathbb{N}\cup\{0\}$ and, analogously, several known lower bounds on $\pra{M_c(G)\in T(1)}$, both type of bounds being expressed in terms of the second moments of $M_c(G)$, it follows that once we know the second order moment structure of $M_c(G)$, we can use any of these bounds to give upper bounds on the number of trivial subgraphs of $G$. For instance, a version of the celebrated Chung-Erd\H{o}s inequality (see \cite{chungerdos},\cite{hoppeseneta}, \cite{petrov_0},\cite{petrov}) reads as follows
\begin{equation}\label{eq:erdoschung_1}
	\pra{M_c(G)\geq 1}\geq \frac{\left(\Ea{M_c(G)}\right)^2}{\Ea{M_c(G)^2}}
\end{equation}
and specializes both Petrov's and Paley-Zygmund inequalities (see again \cite{petrov_0}). Since the second moment of a random variable is always at least the square of its first moment, it follows that 
$$\left( n \atop c\right)\left(1-\frac{\left(\Ea{M_c(G)}\right)^2}{\Ea{M_c(G)^2}}\right)$$
is always a valid upper bound on the number of the independent sets with $c$-vertices. 
\mybreak
The rest of the paper goes as follows. In Section \ref{sec:prel}, we give some background on the moment inequalities we use throughout the paper. In Section \ref{sec:main}, we compute the second order moments of $M_c(G)$ and give examples. Section \ref{sec:res} is devoted to the proofs of the results presented here and to further applications of the second order moment structure of $M_c(G)$.     

\section{Preliminaries}\label{sec:prel}
 For a positive integer $t$ the symbol $[t]$ is the set of the first $t$ positive integers. For a non negative integer $s$, we denote by $\kn{t}{s}$ the $s$-th decreasing factorial of $t$, namely the product $t(t-1)\cdots(t-s+1)$. If $s>t$, then $\kn{t}{s}=0$. Notice that $\kn{t}{0}=1$, $\kn{t}{t}=t!$ and that ${t \choose s}=\kn{t}{s}/s!$.
We deal with discrete random variables with finite support $R$ contained in an interval $\{\ell,\ell+1,\ldots,u\}$ of non negative integers. If $M$ is any such random variable and $r\in R$, then we denote by $p_r$ the probability $\pra{M=r}$. We denote $\Ea{M}$ by $S_1$ and $\frac{1}{2}\Ea{M(M-1)}$ by $S_2$. These are called the first two \emph{binomial moments} of $M$. We denote $\Ea{M^2}$ by $\mu_2$ and $\mu_2-S_1$ by $\sigma^2$ (the variance of $M$).
One has $2S_2+S_1=\mu_2$ and $\sigma^2=2S_2-S_1(S_1-1)$. We resort to such a notation because the tail inequalities we are interested in are typically stated in this terminology due to its relation with Bonferroni's inequalities and the Inclusion/Exclusion Principle (see \cite{hoppeseneta}, and \cite{sachov}). When $M=M_c(G)$, then we write $S_1(c)$, $S_2(c)$, $\mu_2(c)$ and $\sigma^2(c)$ for the corresponding moments.  
\mybreak
The following lemma is a special case of the so-called Fréchet identities (see \cite{sachov}), which we will use in the next section to compute the first two moments of $M_c(G)$. 
\begin{lemma}\label{lem:frechet}
	Let $\mathcal{A}=\{A_e \ |\ e\in E\}$ be a finite collection of events in some probability space $\Psi$. If $M$ is the random variable defined as the number of events which occur among those of $\mathcal{A}$, namely $M=\sum_eI(A_e)$, where $I(\cdot)$ is the indicator function, then
	$$S_1=\sum_{e\in E}\pra{A_e}\,\, \text{and} \,\,\, S_2=\sum_{\substack{\{e,f\}\subseteq E\\ e\not=f}}\pra{A_e\wedge A_f}.$$ 
\end{lemma}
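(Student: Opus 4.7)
The plan is to prove both identities by direct expansion combined with linearity of expectation; no deep ingredient is needed. I will treat each identity in turn, being careful that $I(A_e)$ is a Bernoulli indicator, so $I(A_e)^2 = I(A_e)$ and $I(A_e)I(A_f) = I(A_e \wedge A_f)$.

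For the first identity, by linearity of expectation applied to the finite sum $M = \sum_{e \in E} I(A_e)$, I would write
$$S_1 = \Ea{M} = \sum_{e \in E}\Ea{I(A_e)} = \sum_{e \in E}\pra{A_e},$$
which is exactly the claim.

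For the second identity, I would start from the expansion
$$M^2 = \Bigl(\sum_{e \in E} I(A_e)\Bigr)^2 = \sum_{e \in E} I(A_e)^2 + \sum_{\substack{(e,f)\in E\times E\\ e\not=f}} I(A_e) I(A_f).$$
Using $I(A_e)^2 = I(A_e)$ in the first sum and $I(A_e)I(A_f) = I(A_e \wedge A_f)$ in the second, this simplifies to
$$M^2 = M + \sum_{\substack{(e,f)\in E\times E\\ e\not=f}} I(A_e \wedge A_f),$$
so that $M(M-1)$ equals the ordered-pair sum. Taking expectations and using linearity once more gives
$$\Ea{M(M-1)} = \sum_{\substack{(e,f)\in E\times E\\ e\not=f}} \pra{A_e \wedge A_f}.$$
Since the right-hand side counts each unordered pair $\{e,f\}$ twice (once as $(e,f)$ and once as $(f,e)$, both contributing $\pra{A_e \wedge A_f}$), I would collapse the ordered sum to an unordered one and divide by two to obtain
$$S_2 = \tfrac{1}{2}\Ea{M(M-1)} = \sum_{\substack{\{e,f\}\subseteq E\\ e\not=f}} \pra{A_e \wedge A_f},$$
which is the desired formula.

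There is no real obstacle here: the only bookkeeping point to get right is the passage from ordered to unordered pairs (which accounts for the factor $1/2$ in the definition of $S_2$), and the observation that the diagonal term $e=f$ contributes precisely $M$, so that it is cancelled when forming $M(M-1)$ rather than $M^2$. Finiteness of $E$ guarantees that all sums and exchanges of sum and expectation are unconditionally valid.
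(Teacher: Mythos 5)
Your argument is correct and complete: linearity of expectation for $S_1$, and the expansion of $M^2$ with $I(A_e)^2=I(A_e)$, $I(A_e)I(A_f)=I(A_e\wedge A_f)$, followed by the ordered-to-unordered pair collapse, is exactly the standard derivation of these second-order Fr\'echet identities. The paper itself offers no proof of this lemma (it is quoted as a special case of the Fr\'echet identities with a reference to the literature), so your write-up simply supplies the routine verification that the citation stands in for, and it does so without any gap.
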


\begin{theorem}\label{thm:listin}
Let $M$ be a random variable such that $R\subseteq \{\ell,\ell+1,\ldots,u\}\subseteq \mathbb{N}\cup \{0\}$ with, possibly, $\ell=u$. Then
\begin{enumerate}[label={\rm (\roman*)}]
	\item\label{com:i} \textsc{Second order Fréchet inequality}, $(u-1)S_1\geq 2S_2$ and equality is attained if and only if $\#R=1$ or $\#R=2$ and $\ell=0$.
	\item\label{comm:ii} \textsc{Bathia-Davis inequality}, $\sigma^2\leq (u-S_1)(S_1-\ell)$ and equality is attained if and only if $\#R\leq 2$. 
	\item\label{comm:iii} \textsc{Chung-Erd\H{o}s inequality}, $\pra{M\geq 1}\leq \frac{S_1^2}{2S_2+S_1}$.
	\item\label{com:iv} \textsc{Petrov's inequality}, $\pra{M\geq t}\leq \frac{(S_1-t+1)^2}{2S_2+S_1}$ for any $t$ such that $1\leq t\leq S_1$.
	\item\label{com:v} \textsc{Second order factorial moment inequality}, $\pra{M\geq t}\leq \frac{2S_2}{\kn{t}{2}}$ for $t\geq 1$, $t$ integer. 
\end{enumerate}
Moreover, the Bathia-Davis inequality and the second order Fréchet inequality are equivalent whenever $\ell=u$ or $\ell=0$ while, if $0<\ell<u$, then the former dominates the latter.   	
\end{theorem}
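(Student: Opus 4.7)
The plan is to dispatch items \ref{com:i}--\ref{com:v} one at a time by short arguments based on non-negative quadratic polynomials in $M$ (combined with Markov or Cauchy--Schwarz), and then to settle the dominance assertion by a one-line algebraic rewriting of Bathia--Davis.

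For \ref{com:i}, I would expand
$$
(u-1)S_1 - 2S_2 \;=\; \sum_{r\in R}\bigl[(u-1)r - r(r-1)\bigr]p_r \;=\; \sum_{r\in R} r(u-r)\,p_r,
$$
which is non-negative because $R\subseteq\{0,\ldots,u\}$. Equality forces $r(u-r)p_r = 0$ for every $r\in R$, i.e.\ $R\subseteq\{0,u\}$, which matches the stated characterisation ($\#R=1$, or $\#R=2$ with $\ell=0$). For \ref{comm:ii}, the pointwise inequality $(r-\ell)(u-r)\geq 0$ on $[\ell,u]$ rewrites as $r^2 \leq (\ell+u)r - \ell u$; taking expectations yields $\mu_2 \leq (\ell+u)S_1 - \ell u$, and subtracting $S_1^2$ one recognises $(u-S_1)(S_1-\ell)$ on the right. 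Equality amounts to $(r-\ell)(u-r)=0$ for every $r\in R$, i.e.\ $R\subseteq\{\ell,u\}$, hence $\#R\leq 2$.

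For \ref{comm:iii} and \ref{com:iv} I would apply Cauchy--Schwarz to the pair $\bigl(M\mathbf{1}_{\{M\ge t\}},\mathbf{1}_{\{M\ge t\}}\bigr)$. For (iii), $S_1 = \Ea{M\mathbf{1}_{\{M\ge 1\}}} \leq \sqrt{\Ea{M^2}\,\pra{M\ge 1}}$ together with $\Ea{M^2}=2S_2+S_1$ finishes the argument. For (iv), the observation $M-(t-1)\leq M\mathbf{1}_{\{M\ge t\}}$ (valid for non-negative integer $M$ and $t\ge 1$) gives $S_1-(t-1) \leq \Ea{M\mathbf{1}_{\{M\ge t\}}}$, which is non-negative in the stated range $t\leq S_1$; squaring and applying Cauchy--Schwarz closes the estimate. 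For \ref{com:v}, I would apply Markov to $M(M-1)$ together with the containment $\{M\ge t\}\subseteq\{M(M-1)\ge t(t-1)\}$ valid for $t\ge 1$, giving
$$
\pra{M\ge t}\leq \frac{\Ea{M(M-1)}}{t(t-1)} = \frac{2S_2}{\kn{t}{2}}.
$$

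For the dominance statement, I would rewrite Bathia--Davis using $\sigma^2 = 2S_2 - S_1(S_1-1)$; expanding $(u-S_1)(S_1-\ell) = (u-1)S_1 - \ell(u-S_1) - (S_1^2 - S_1)$, the inequality in \ref{comm:ii} becomes exactly
$$
2S_2 \leq (u-1)S_1 - \ell(u-S_1).
$$
When $\ell = 0$ the correction term vanishes and the two inequalities coincide; when $\ell = u$ the support collapses to $\{u\}$, $S_1=u$, and both bounds become equalities. For $0<\ell<u$ the non-negative term $\ell(u-S_1)$ is strictly positive (otherwise $S_1=u$ would force $R=\{u\}$, contradicting $\ell<u$), so Bathia--Davis strictly sharpens second-order Fr\'echet.

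I expect the main subtlety to lie in this last dominance assertion: Fr\'echet and Bathia--Davis live at different scales (one bounds $2S_2$, the other bounds $\sigma^2$), so the trick is to bring them into a common form where their difference reduces to the single non-negative term $\ell(u-S_1)$, and then to inspect the boundary values $\ell\in\{0,u\}$ to confirm that degeneracy occurs precisely there.
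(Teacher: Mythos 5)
Your proposal is correct, and it is in fact more self-contained than the paper's own proof, which does not reprove items (i), (ii) and (iv) at all: it cites Hoppe--Seneta and linear-programming duality for them, obtains (iii) from the original event form of the Chung--Erd\H{o}s inequality via Lemma \ref{lem:frechet}, proves (v) by exactly your Markov argument, and spends the written proof on the tightness of (i) and the relation between (i) and (ii), splitting into the cases $\ell=0$, $\ell=u$, $0<\ell<u$ and dividing the rewritten Bathia--Davis bound by $S_1$ --- essentially your identity $2S_2\leq (u-1)S_1-\ell(u-S_1)$ in quotient form. Your pointwise-quadratic arguments for (i) and (ii) buy the equality characterisations directly (the paper only asserts them), and your Cauchy--Schwarz treatment of (iii)--(iv) replaces citations by a short proof valid for any integer-valued $M$, not just counts of events. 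Three small remarks. First, the ``$\leq$'' signs in (iii) and (iv) of the statement are typographical slips: Chung--Erd\H{o}s and Petrov are \emph{lower} bounds on the tail, as the inequality \eqref{eq:erdoschung_1} and the applications in Section \ref{sec:res} confirm, and your proof establishes the correct ``$\geq$'' direction. Second, in (v) the bound is vacuous at $t=1$ (division by $\kn{t}{2}=0$), exactly as in the paper's own phrasing, so nothing is lost. Third, your equality characterisations in (i)--(ii) and the strictness claim ``$\ell(u-S_1)>0$'' implicitly take $\ell$ and $u$ to be the actual minimum and maximum of $R$ (otherwise, e.g., $R=\{u\}$ with $\ell<u$ is compatible with the hypotheses); this is the same convention the paper tacitly adopts, and for the dominance assertion only $\ell(u-S_1)\geq 0$ is needed, so the strictness is inessential.
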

\begin{proof}
All the inequalities in the statement above, but the last one, can be found in \cite{hoppeseneta} and almost all of them can be obtained via linear programming duality (see \cite{bopre}). The second order factorial inequality in \ref{com:v} follows straightforwardly by Markov's inequality once we observe that for a positive integer $t$ one has $M\geq t\Longrightarrow M(M-1)\geq t(t-1)$. The Chung-Erd\H{o}s inequality was originally written as 
$$\pra{\bigvee_e A_e}\geq \left. \left(\sum_{e\in E}\pra{A_e}\right)^2 \middle / \sum_{(e,f)\in E^2}\pra{A_e\bigwedge A_f}\right.$$
for arbitrary events $A_e$, $e\in E$ in a probability space \cite{chungerdos}, and it assumes the form given in \ref{comm:iii} after Lemma \ref{lem:frechet} by defining $M$ as the number of events among the $A_e$'s which occur. It remains to prove the tightness of the inequality in \ref{com:i} and the relation between this inequality and the inequality in \ref{comm:ii}. We prove both facts at the same time. Since $\sigma^2=2S_2-S_1(S_1-1)$, if $\ell=0$, then the Bathia-Davis inequality takes the form $2S_2\leq S_1(u-1)$ which is precisely the second order Fréchet inequality for $u>1$. If $\ell=u$, then both inequalities are equivalent to $S_2=S_1(S_1-1)$: the former because $0=\sigma^2=2S_2-S_1(S_1-1)$, the latter because 
$$2S_2=\Ea{M\atop 2}=u(u-1)=S_1(S_1-1)$$ 
as $p_u=1$ and $\ell=S_1=u$. Suppose now that $\ell>0$. In this case, the Bathia-Davis inequality takes the form $2S_2\leq S_1(\ell+u-1)-\ell u$. Now $S_1$ is positive because $\ell>0$. Hence we can divide both sides of the inequality by $S_1$ to obtain 
$$\frac{2S_2}{S_1}+1+\ell\left(\frac{u}{S_1}-1\right)\leq u$$ 
which implies the second order Fréchet inequality because $\ell\left(\frac{u}{S_1}-1\right)$ is positive.
\end{proof}

\section{The first two moments of $M_c(G)$}\label{sec:main}
Let $G$ be a graph with order $n$ and size $m$ and $\mathcal{V}_c$ be the set of all subsets of $c$ elements of $V(G)$. Consider the probability space $\Psi_c(G)$ whose sample space is $\mathcal{V}_c$ equipped with the uniform measure $p$. Hence $p(U)=c!/\kn{n}{c},\,\forall U\in \mathcal{V}_c$ and the $U$'s are equally likely. The random variable $M_c(G)$ is defined over this space as the size of the subgraph induced by a set $U\in\mathcal{V}_c$ drawn with probability $p(U)$.  Since $p$ is the uniform measure it is clear that $\pra{M_c(G)=k}=i_G(c,k)c!/\kn{n}{c}$ where $i_G(c,k)$ is the number of induced subgraphs of $G$ with order $c$ and size $k$. Notice that 
\begin{itemize}
	\item[--] for any graph $G$ and any $c\geq 2$, $i_G(c,0)$ is the number of independent sets of $c$-vertices;
	\item[--] for any non bipartite graph $G$ and any $c\geq 2$, $i_G(c,\frac{\kn{c}{2}}{2})$ is the number of cliques of $c$-vertices while if $G$ is bipartite and, for simplicity, $c$ is even, then $i_G(c,\frac{c^2}{4})$ is the number of balanced bicliques of $G$;
	\item[--] if $T$ is a tree, then  $i_T(c,c-1)$ is the number of subtrees of order $c$. 
\end{itemize}
The support $R_c(G)$ of $M_c(G)$ is contained in the interval $\{\ell(c),\ldots,u(c)\}$. As saw in Section \ref{sec:intro}, $\ell(c)\geq \max\{0,c-\alpha(G)\}$, $\alpha(G)$ being the independence number of $G$. A trivial upper bound on $u(c)$ is given by $\gamma(c)$ where $\gamma_c$ is either $\lfloor c/2\rfloor\lceil c/2 \rceil$, i.e., the size of the complete bipartite graph on $c$ vertices (if $G$ is bipartite), or $\gamma_c$ equals $\frac{c(c-1)}{2}$, i.e., the size of the complete graph on $c$ vertices. A more refined bound for the maximum of $M_c(G)$ comes from the Motzkin-Straus Theorem \cite{motzstrauss} which asserts that if $A$ is the adjacency matrix of $G$, then the maximum of the quadratic form $x'Ax/2$ over the standard simplex of $\R^n$ equals $\frac{1}{2}(1-\frac{1}{\omega})$, $\omega$ being the clique-number of $G$. Hence, if $z$ is the characteristic vector of a set of $c$ vertices of $V(G)$, then $c^{-1}z$ belongs to the standard simplex and we conclude that $u(c)\leq \frac{c^2}{2}\left(1-\frac{1}{\omega}\right)$. Therefore $u(c)$ is bounded from above by the smallest among $\gamma_c$ and the Motzkin-Straus bound. Notice that if the graph is bipartite, then the two bounds coincide. If $G$ is non bipartite, then $\gamma_c$ is smaller than the Motzkin-Straus bound if and only if $c\leq \omega-1$. Let us consider some examples.
\begin{example}\label{ex:1}\,\,
\begin{enumerate}[label={\rm(\alph*)}]
	\item\label{com:a} If $G\cong K_n$, then $R_c(K_n)=\{\gamma_c\}$ for all $c\in [n]$. Hence, $\pra{M_c(K_n)=\gamma_c}=1$; the random variable is therefore degenerate.
	\item\label{com:b} If $G\cong K_{1,n-1}$, then $R_c(K_{1,n-1})=\{0,c-1\}$ and $M_c(K_{1,n-1})$ attains 0 with probability $1-\frac{c}{n}$ while $M_c(K_{1,n-1})$ attains $c-1$ with probability $\frac{c}{n}$. Observe that $1-\frac{c}{n}={n-1\choose c}/{n\choose c}$ and $\frac{c}{n}={n-1\choose c-1}/{n\choose c}$. 
	\item\label{com:c} If $G\cong K_{d,d}$, $n=2d$ and, for simplicity, $c$ is even, then $R_c(K_{d,d})=\{0,c-1,2(c-2),\ldots,c^2/2\}$. In this case 
	$$\pra{M_c(K_{d,d})=st}=2\frac{{d \choose s}{d \choose t}}{{n \choose s+t}},$$
	namely, if $Z=\frac{c-\sqrt{c^2-4M_c(K_{d,d})}}{2}$, then the distribution of $Z$ conditioned on the event $(Z\leq c/2)$ is the same as the the distribution of $Z$ conditioned on the event $(Z\geq c/2)$  and both are hypergeometric with parameters $c$, $d$ and $2d$.  
	\item\label{com:d} $R_c(K_2^d)=\{0,1,\ldots,c\}$, where $c$ is an even positive number, $c\leq d$ and $K_2^d$ is the graph consisting of $d$ disjoint edges. For $k$ such that $0\leq k\leq d$ one has 
	$$\pra{M_c(K_2^d)=k}=\left(2d \atop c\right)^{-1}\left(d \atop k,c-2k\right)2^{c-2k}.$$
	To see this, observe that the probability on the left hand side can be interpreted as the probability that in a random coloring of the vertices by the colors red and blue, one sees  $k$ edges whose ends have color blue, $c-2k$ edges whose ends have different colors, and $d-c+k$ edges whose ends have color red.   
	The fact that the formula above defines a probability distribution reduces to check that  
	$$\left(2d \atop c\right)=\sum_{k=0}^{c/2}\left(d \atop k,c-2k\right)2^{c-2k},$$
	an identity which follows by observing that 
	$$\left(2d \atop c\right)=[X^c](1+X)^{2m}=[X^c](1+2X+X^2)^{m}=\sum_{k=0}^{c/2}\left(d \atop k,c-2k\right)2^{c-2k},$$
	where in the last equality we used the multinomial Theorem and $[X^c]Q(X)$ denotes the coefficient of $X^c$ in the polynomial $Q(X)$. 
\end{enumerate}
\end{example}
The following theorem proves in a clearer and shorter way formulas for $S_1(c)$ and $S_2(c)$ given in \cite{our}.  
\begin{theorem}\label{thm:main}
Let $G$ be a graph with order $n$ and size $m$ and let $d(v)$ denote the degree of vertex $v\in V(G)$. For an integer  $c$ such that $2\leq c\leq n$, the first and second binomial moment of $M_c(G)$ are given by the formulas
\begin{equation}\label{eq:bin_mom_1}
	S_1=\frac{\kn{c}{2}}{\kn{n}{2}}m
\end{equation}
and
\begin{equation}\label{eq:bin_mom_2}
	2S_2=\frac{\kn{c}{3}}{\kn{n}{3}}\left\{\frac{n-c}{n-3}\sum_{v\in V(G)}d(v)(d(v)-1)+\frac{c-3}{n-3}m(m-1)\right\}.
\end{equation}
Moreover, $S_2(c)$ can be computed in linear time. 
\end{theorem}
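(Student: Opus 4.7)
The plan is to apply Lemma~\ref{lem:frechet} to the collection of events $\mathcal{A}=\{A_e\mid e\in E(G)\}$ defined on the probability space $\Psi_c(G)$ by setting $A_e$ to be the event that both endpoints of $e$ belong to the randomly sampled $c$-set $U$. Since $M_c(G)=\sum_{e\in E(G)}I(A_e)$, the lemma yields $S_1=\sum_{e}\pra{A_e}$ and $2S_2=\sum_{\{e,f\}\subseteq E,\ e\neq f}2\,\pra{A_e\wedge A_f}$.

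For $S_1$, every edge requires exactly two prescribed vertices to lie in $U$, so $\pra{A_e}=\binom{n-2}{c-2}/\binom{n}{c}=\kn{c}{2}/\kn{n}{2}$ for every $e$. Summing over the $m$ edges gives formula \eqref{eq:bin_mom_1}. For $S_2$ I would split unordered pairs of distinct edges $\{e,f\}$ according to $|V(e)\cup V(f)|\in\{3,4\}$. If $e,f$ share a vertex then three prescribed vertices must lie in $U$, giving $\pra{A_e\wedge A_f}=\kn{c}{3}/\kn{n}{3}$; the number of such pairs is $\sum_{v\in V(G)}\binom{d(v)}{2}$, counted by choosing the common vertex and two of its incident edges. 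If $e,f$ are vertex-disjoint then $\pra{A_e\wedge A_f}=\kn{c}{4}/\kn{n}{4}$, and the number of such pairs is $\binom{m}{2}-\sum_v\binom{d(v)}{2}$.

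Adding the two contributions, one obtains
\[
2S_2=\frac{\kn{c}{3}}{\kn{n}{3}}\sum_{v}d(v)(d(v)-1)+\frac{\kn{c}{4}}{\kn{n}{4}}\left(m(m-1)-\sum_{v}d(v)(d(v)-1)\right).
\]
The key algebraic observation is that $\kn{c}{4}/\kn{n}{4}=(\kn{c}{3}/\kn{n}{3})\cdot (c-3)/(n-3)$, so after factoring out $\kn{c}{3}/\kn{n}{3}$ the bracket becomes
\[
\sum_{v}d(v)(d(v)-1)\left(1-\tfrac{c-3}{n-3}\right)+\tfrac{c-3}{n-3}m(m-1)=\tfrac{n-c}{n-3}\sum_{v}d(v)(d(v)-1)+\tfrac{c-3}{n-3}m(m-1),
\]
which is exactly \eqref{eq:bin_mom_2}. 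The only subtlety is handling the degenerate case $c=3$, where the second summand vanishes and the formula correctly reduces to $\sum_v d(v)(d(v)-1)\cdot\kn{c}{3}/\kn{n}{3}$ since no pair of disjoint edges can simultaneously lie in a $3$-set.

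Finally, linear-time computability of $S_2(c)$ follows from the observation that the non-trivial data appearing in the formula are $m$ and $\sum_{v\in V(G)}d(v)(d(v)-1)$; both can be computed by a single pass over the adjacency-list representation of $G$ in $O(n+m)$ time, while the remaining factors depend only on $n$ and $c$. I do not anticipate any real obstacle: the Fréchet identities of Lemma~\ref{lem:frechet} do all the work, and the argument is essentially a careful bookkeeping of pairs of edges according to whether they share a vertex or not.
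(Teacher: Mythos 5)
Your proposal is correct and follows essentially the same route as the paper's own proof: apply Lemma~\ref{lem:frechet} to the events $A_e$, compute $\pra{A_e}=\kn{c}{2}/\kn{n}{2}$, and split pairs of distinct edges into adjacent pairs (counted by $\sum_v\binom{d(v)}{2}$, probability $\kn{c}{3}/\kn{n}{3}$) and vertex-disjoint pairs (probability $\kn{c}{4}/\kn{n}{4}$), then simplify using $\kn{c}{4}/\kn{n}{4}=(\kn{c}{3}/\kn{n}{3})\cdot\frac{c-3}{n-3}$. The bookkeeping, the algebraic reduction to \eqref{eq:bin_mom_2}, and the linear-time observation all match the paper's argument.
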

\begin{proof}
We use Lemma \ref{lem:frechet}. For $e\in E(G)$, $e=uv$, let $A_e$ be the event in $\Psi_c(G)$ which occurs whenever $u$ and $v$ belong to the randomly drawn subset $U\in \mathcal{V}_c$. In other words, $A_e$ is the event that edge $e$ is induced by $U$. Hence $M_c(G)$ is the number of events which occur among the $A_e$'s or, equivalently, $M_c(G)=\sum_{e\in E(G)}I(A_e)$, where $I(\cdot)$ is the indicator function. In view of Lemma \ref{lem:frechet}, to compute $S_1(c)$ and $S_2(c)$ we have just to compute $\pra{A_e}$ for all $e\in E(G)$ and $\pra{A_e\wedge A_f}$ for all pair of distinct edges $e$ and $f$ of $G$. The first task is easy to accomplish: the probability that $e$ is induced by $U$ is simply 
$${n-2 \choose c-2}/{n \choose c}=\frac{\kn{n}{c-2}}{(c-2)!}\frac{c!}{\kn{n}{c}}=\frac{\kn{c}{2}}{\kn{n}{2}},$$
namely the probability that $U$ contains two specified vertices. We obtain \eqref{eq:bin_mom_1} after summing over $E(G)$. Let us compute $\pra{A_e\wedge A_f}$ for two distinct edges $e$ and $f$ of $G$. We distinguish two cases: either $e$ and $f$ are adjacent, we write $e\sim f$ in this case, or they are not and we write $e\not \sim f$ accordingly. If $e\sim f$, then $e$ and $f$ span three vertices (we have tacitly assumed that the graph is simple). Therefore $A_e\wedge A_f$ occurs if and only if these three vertices belong to the randomly drawn subset $U$. Reasoning as before, the probability that $U$ contains three specified vertices is $\frac{\kn{c}{3}}{\kn{n}{3}}$. If $e\not\sim f$, then they span four vertices and, in this case, $\pra{A_e\wedge A_f}$ is $\frac{\kn{c}{4}}{\kn{n}{4}}$, namely the probability that $U$ contains four specified vertices.
It follows that
\[
\begin{split}
S_2(c)&=\sum_{e\sim f}\frac{\kn{c}{3}}{\kn{n}{3}}+\sum_{e\not\sim f}\frac{\kn{c}{4}}{\kn{n}{4}}\\
&=\frac{\kn{c}{3}}{\kn{n}{3}}\sum_{v\in V(G)}{d(v) \choose 2}+\frac{\kn{c}{4}}{\kn{n}{4}}\left[{m \choose 2}-{d(v) \choose 2}\right]
\end{split}
\]
and after some simple manipulations we arrive at \eqref{eq:bin_mom_2}. 
In view of \eqref{eq:bin_mom_1} and \eqref{eq:bin_mom_2}, the problem of computing the first two binomial moments of $M_c$ is  polynomial-time solvable as it amounts to computing the degrees of $G$ and this task can be trivially accomplished in $O(m)$ time. 
\end{proof}
Below, we list the first two binomial moments of $M_c(G)$ for the graphs in \ref{com:a} through \ref{com:d} of Example \ref{ex:1}. In the sequel, when $G$ is understood, we omit explicit reference to $G$. In particular, we write $M_c$, $R_c$, for $M_c(G)$ and $R_c(G)$, respectively.
\begin{example}[Example \ref{ex:1} cont'd]\label{ex:1cond} For ease of notation, we omit the dependence on $c$ when writing the binomial moments. Recall that for a positive integer $d$, $K_2^d$ denotes the isomorphism class of a graph of order $2d$ and size $d$ consisting of $d$ disjoint edges.
\begin{enumerate}[label={\rm(\alph*)}]
	\item\label{com:a1} $G\cong K_n$, $n\geq 1$, $0\leq c\leq n$. One has $$S_1=\frac{\kn{c}{2}}{2},\quad \text{and}\quad 2S_2=\kn{c}{3}+\frac{\kn{c}{4}}{4}.$$ 
	Note that $K_n$ has $\frac{\kn{n}{3}}{2}$ copies of $P_3$ and $\frac{\kn{n}{4}}{8}$ copies of $K_2^2$. Also notice that
	$$\frac{2S_2}{S_1}+1=\frac{\kn{c}{2}}{2},$$
	is the maximum size of an induced subgraph of order $c$.
	\item\label{com:b1} $G\cong K_{1,n-1}$, $n\geq 2$, $2\leq c\leq n$. One has $$S_1=\frac{\kn{c}{2}}{n},\quad \text{and}\quad 2S_2=\frac{\kn{c}{3}}{n}.$$ 
	Note that $K_{1,n-1}$ has no copy of $K^2_2$. Also notice that
	\begin{equation}\label{eq:star_ratio}
		\frac{2S_2}{S_1}+1=c-1
	\end{equation}
	is the maximum size of an induced subgraph of order $c$.
	\item\label{com:c1} $G\cong K_{d,d}$, $n=2d$ and, for simplicity, $c$ is even and $2\leq c\leq 2d-1$. One has
	$$S_1=\frac{\kn{c}{2}}{2}\frac{d}{2d-1},\quad \text{and}\quad 2S_2=\frac{\kn{c}{3}}{4}\frac{d}{2d-1}\left\{2+(c-3)\frac{d-1}{2d-3}\right\}.$$ 
	Notice that 
	$$\frac{2S_2}{S_1}+1=\frac{\kn{c}{2}}{4}+\left(\frac{(c-2)(c-3)}{4(2d-3)}\right).$$
	Hence, 
	$$c^2/4-\left\lceil\frac{2S_2}{S_1}+1\right\rceil$$
	is $O(c)$ for all integer $c$ such that $2\leq c\leq 2d-1$ while, if $c$ is $\Theta(d)$, then this difference is $o(1)$. 
	\item\label{com:d1} $G\cong K_2^d$, $n=2d$ and, for simplicity, $c$ is even with $2\leq c\leq 2d-1$. One has
	$$S_1=\frac{\kn{c}{2}}{2(2d-1)},\quad \text{and}\quad 2S_2=\frac{\kn{c}{4}}{4(2d-1)(2d-3)}.$$ 
	Note that $G$ has of copy of $P_3$ and that 
	$$\frac{2S_2}{S_1}+1=\frac{c-2}{2}\left(\frac{c-3}{2d-3}\right)+1.$$
	Hence, the difference between the maximum size $\frac{c}{2}$ of an induced subgraph of order $c$ and the value $\lceil\frac{2S_2}{S_1}+1\rceil$ is $o(c)$ if and only if $c$ is $\Theta(d)$.
	\end{enumerate}
\end{example} 
Notice, that even in these striking special cases, the computation of the first two binomial moments might be non trivial. Consider, for instance, the case of complete bipartite graphs: a direct computation of $2S_2$ (but already of $S_1$) as $2\Ea{M_c(G) \atop 2}$ asks for computing the sum 
$$\sum_{s\leq c/2}\frac{{d \choose s}{d \choose t}}{{n \choose c}}\left\{[s(c-s)]^2-s(c-s)\right\}.$$
Therefore, as a by-product, Theorem \ref{thm:main}, gives combinatorial identities for the first two moments of the random variables defined in \ref{com:c} and \ref{com:d} of Example \ref{ex:1}. Remark that the support of these random variables is not an interval of non negative integer numbers.
\section{Applications}\label{sec:res}
In this section we prove the results given in Section \ref{sec:intro} and discuss further application of Theorem \ref{thm:main}. Let us start with Theorem  \ref{thm:bh}.
\subsection{Proof of Theorem \ref{thm:bh}}
To prove the theorem we need the following folklore fact which we prove in a slightly less standard way (essentially avoiding the use of Cauchy-Schwarz or Jensen's inequality).
\begin{lemma}\label{lem:degree}	Let $G$ be a graph of order $n$ and size $m$. The degrees $d(v)$, $v\in V(G)$, of $G$ satisfy the following inequality 
	\begin{equation}\label{eq:std_degrees}
		\sum_{v\in V(G)}d(v)(d(v)-1)\geq \frac{4m^2}{n}-2m.
	\end{equation} 
Equality is attained in \eqref{eq:std_degrees} by the degrees of regular graphs.
\end{lemma}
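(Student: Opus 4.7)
My plan is to reduce the claim to the non-negativity of a sum of squares, which carries the same content as Cauchy--Schwarz or Jensen but can be stated without invoking either by name. By the handshaking lemma $\sum_{v\in V(G)} d(v) = 2m$, so it suffices to prove that $\sum_{v} d(v)^2 \geq 4m^2/n$; subtracting $\sum_{v} d(v) = 2m$ from both sides then yields the lemma.

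To establish $\sum_{v} d(v)^2 \geq 4m^2/n$, I would first write the Lagrange-type identity
\[
n\sum_{v\in V(G)} d(v)^2 \;-\; \Bigl(\sum_{v\in V(G)} d(v)\Bigr)^{\!2} \;=\; \sum_{\{u,v\}\subseteq V(G)} (d(u)-d(v))^2,
\]
which is verified by direct expansion: on the right-hand side each $d(v)^2$ occurs $n-1$ times (once for each other vertex of $G$), contributing $(n-1)\sum_{v} d(v)^2$, while the cross terms contribute $-2\sum_{\{u,v\}} d(u)d(v) = -\bigl[(\sum_{v} d(v))^2 - \sum_{v} d(v)^2\bigr]$, and summing these two contributions gives the left-hand side. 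Since the right-hand side is a sum of squares, it is non-negative. Substituting $\sum_{v} d(v) = 2m$ yields $\sum_{v} d(v)^2 \geq 4m^2/n$, and subtracting $2m$ gives \eqref{eq:std_degrees}.

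The equality discussion then falls out of this presentation: the right-hand side of the identity vanishes if and only if $(d(u)-d(v))^2 = 0$ for every pair $\{u,v\}$, i.e., iff the degree sequence is constant; conversely, if $G$ is $d$-regular with $d=2m/n$, then $\sum_{v} d(v)^2 = n d^2 = 4m^2/n$ and equality holds throughout. I do not anticipate a substantive obstacle here: the argument is essentially a two-line algebraic manipulation, and the ``slightly less standard'' flavor the author announces seems to consist precisely in writing the defect $n\sum_{v} d(v)^2 - (\sum_{v} d(v))^2$ explicitly as a sum of pairwise squared differences, rather than citing Cauchy--Schwarz or Jensen and reading off the same inequality from a general statement.
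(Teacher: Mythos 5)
Your proof is correct: the Lagrange-type identity is verified exactly as you expand it, it gives $n\sum_v d(v)^2\geq\bigl(\sum_v d(v)\bigr)^2=4m^2$, and subtracting $\sum_v d(v)=2m$ yields \eqref{eq:std_degrees}, with equality precisely for constant degree sequences. The route differs from the paper's, though the underlying certificate is the same sum-of-squares fact. The paper views the degree sequence as a discrete random variable (the degree of a uniformly random vertex): its second binomial moment is $a=\frac{1}{2n}\sum_v d(v)(d(v)-1)$, its first moment is $b=2m/n$, and non-negativity of its variance, $2a-b(b-1)\geq 0$, is literally the claimed inequality, with equality iff the variance vanishes, i.e.\ iff $G$ is regular. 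So the paper's ``slightly less standard'' twist is not the pairwise-differences identity you guessed, but the probabilistic reformulation in terms of binomial moments, which dovetails with the $S_1,S_2$ machinery and the variance-of-$M_c$ viewpoint used throughout (notably in the proof of Theorem \ref{thm:bh}). Your version buys a completely self-contained algebraic argument and makes the equality characterization (both directions) explicit at a glance; the paper's version buys thematic coherence and brevity by reusing the same moment language applied to a different random variable. Either proof is adequate for the lemma as stated.
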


\begin{proof}
	To prove \eqref{eq:std_degrees}, we observe that $\sum_{v\in V(G)}d(v)(d(v)-1)$ is $2na$ where $a$ is the second binomial moment of the degree distribution of $G$, viewed as a discrete random variable. The first binomial moment $b$ of this variable is simply the average degree $\frac{2m}{n}$ of $G$. Since the variance of any random variable is non negative (because it is the expectation of a non negative random variable) it follows that $2a-b(b-1)\geq 0$ which is precisely the inequality we wanted to prove. Equality in \eqref{eq:std_degrees} is attained precisely when the variance of the degree distribution is zero. Hence it is attained when all the degrees are equal.
\end{proof}	
\begin{theoremproof}{\ref{thm:bh}}
Let $\mathcal{G}$ be the set of graphs that qualify for the thesis. We show that the graphs in $\mathcal{G}$ are regular. Denote by $s$ the common size of the induced subgraphs of order $c$. Let $U$ be any subset of $V(G)$ of $c+1$ vertices and $H$ be the graph induced by $U$. Since $H-v$ has size $s$ for all $v\in V(H)$, it follows that $H$ is regular. Therefore, any $v\in V(G)$ has the same number $\nu$ of neighbours in each subset $U'$ consisting of $c$ vertices of $V(G)$. Hence $d_G(v){n-1\choose c-1}=\nu{n\choose c}$ and we conclude that $d_G(v)$ is independent of $v$. Next, the hypothesis is equivalent to the fact that the variance $\sigma^2$ of $M_c(G)$ is zero for each $G\in \mathcal{G}$, i.e., $2S_2-S_1(S_1-1)=0$ on $\mathcal{G}$. By Lemma \ref{lem:degree} it holds that $\sum_{v\in V(G)}d_{G}(v)(d_{G}(v)-1)=4\frac{m^2}{n}-2m$, $\forall G\in \mathcal{G}$. Plugging the last expression in $S_2$ as defined by \eqref{eq:bin_mom_2} yields a degree-2 equation in $m$ with rational coefficients and, as such, it has at most two real roots. One of the roots is trivially $m=0$ which corresponds to co-complete graphs. The other root is obtained by solving the linear equation in $m$ given by $\frac{2S_2}{S_1}+1=S_1$ where the division by $S_1$ is entitled by $m\not=0$. Since $m=\kn{n}{2}/2$ satisfies this equation, we conclude that the other root corresponds to complete graphs. 
\end{theoremproof}

\subsection{Bounds for the size of the Densest and the Lightest induced Subgraph via Fréchet and Bathia-Davis inequalities}
Theorem \ref{thm:fre_1} gives a lower bound on the size of the so-called $c$-densest subgraph. The problem of approximating this size appears to be very difficult (see \cite{feige,khot}). The related problem of approximating the size $\ell(c)$ of the so-called $c$-lightest (or $c$-sparsest) subgraph, is at least as difficult as the previous one \cite{remi}. Having easily computable bounds for these problems is thus desirable. Let us start by proving Theorem \ref{thm:fre_1}. As said in Section \ref{sec:intro}, Theorem \ref{thm:fre_1} follows quite straightforwardly after the second order Fréchet inequality in the form
$$\frac{2S_2(c)}{S_1(c)}+1\leq u(c)$$
by Theorem \ref{thm:main}. It remains to show that the bound is actually achieved only by the complete graphs, the stars and the graphs consisting of an edge and an independent set of $n-2$ vertices. Since by \ref{com:a1} in Example \ref{ex:1cond}, the bound is attained by the complete graphs, we have only to show that stars and graphs that induce exactly one edge are the only other graphs attaining the bound. To this end, we need the following two extremality results.
\begin{theorem}[\cite{idue}]\label{thm:idue}
	Let $G$ be a graph of order $n$ and $c$ be an integer such that $4\leq c\leq n-4$. If $\#R_c=2$, then $G$ is either a star, the disjoint union of an edge ad $n-2$ vertices or the complement of such graphs.
\end{theorem}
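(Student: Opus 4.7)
Write $R_c=\{a,b\}$ with $a<b$ and set $\delta:=b-a$. The plan is first to exploit the symmetry under complementation: since $|E(G[U])|+|E(\overline{G}[U])|={c \choose 2}$ for every $c$-subset $U$, the hypothesis $\#R_c(G)=2$ passes to $\overline{G}$, and the four graph families in the statement split into two complementary pairs. So it suffices to treat the ``sparse'' case $S_1(c)\le{c \choose 2}/2$ and prove that then $G\in\{K_{1,n-1},\ K_2\cup\overline{K_{n-2}}\}$.

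Next I would extract two local degree constraints from the hypothesis. \textbf{(a)} For every $(c+1)$-subset $U$, deleting a vertex $v\in U$ leaves a $c$-subset of induced size $|E(G[U])|-d_U(v)\in\{a,b\}$, so $\{d_U(v):v\in U\}$ has at most two elements, differing by exactly $\delta$. \textbf{(b)} For every $(c-1)$-subset $W$ and any $x,y\notin W$, the $c$-subsets $W\cup\{x\}$ and $W\cup\{y\}$ have induced sizes differing by $d_W(y)-d_W(x)$, whence $|d_W(x)-d_W(y)|\in\{0,\delta\}$. Using the slack $4\le c\le n-4$ (which leaves room for both nontrivial $(c-1)$-subsets and $(c+1)$-subsets avoiding any prescribed pair of vertices), I would aggregate (b) over many $W$ avoiding a fixed pair $u,v$ to show that the ambient degrees of $G$ themselves take only a few values.

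Combined with the equality case of the Bathia--Davis inequality in Theorem \ref{thm:listin}, which reads $2S_2(c)=(a+b-1)S_1(c)-ab$ whenever $\#R_c\le 2$, and with the closed forms of $S_1(c)$ and $S_2(c)$ from Theorem \ref{thm:main}, this yields a polynomial identity linking $m$, $\sum_v d(v)(d(v)-1)$, $n$, $c$, $a$ and $b$. Together with the degree bounds just extracted, the identity should force the degree sequence of $G$ (in the sparse case) to be either $(n-1,1,\ldots,1)$ or $(1,1,0,\ldots,0)$, i.e., those of $K_{1,n-1}$ and $K_2\cup\overline{K_{n-2}}$ respectively. To upgrade from the degree sequence to the exact structure, I would apply constraint (a) to a $(c+1)$-subset containing every endpoint of an edge of $G$ (which is feasible because $c+1\le n-3$ and the edge-support is small in the sparse regime), pinning down the edge arrangement uniquely.

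The hard part will be the aggregation step. The constraints (a) and (b) are individually necessary but are satisfied by many non-extremal graphs; the leverage needed to pin the degree sequence down comes only from marrying them with the single algebraic equality delivered by the second moment. My expectation is that the stronger hypothesis $c\le n-4$, as opposed to the milder $c\le n-2$ used in Theorem \ref{thm:bh}, is precisely what makes the aggregation work: the extra slack supplies enough independent $(c-1)$-subsets $W$ for the pairwise relations in (b) to congeal into a genuinely global constraint.
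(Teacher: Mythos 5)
This statement is not proved in the paper at all: Theorem \ref{thm:idue} is imported verbatim from \cite{idue} (Axenovich--Balogh), where it is established by a substantial extremal-graph-theory argument; the present paper only uses it as a black box in the proofs of Theorems \ref{thm:fre_1} and \ref{thm:fre_2}. So there is no in-paper proof to compare against, and what you have written is, by its own admission, a plan rather than a proof. The complementation reduction and the two local constraints (a) and (b) are correct and standard (they are essentially the same observations used in the paper's proof of Theorem \ref{thm:bh}), and the Bhatia--Davis equality $2S_2(c)=(a+b-1)S_1(c)-ab$ for $\#R_c\leq 2$ is a legitimate consequence of Theorem \ref{thm:listin}\ref{comm:ii} together with Theorem \ref{thm:main}.

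The genuine gap is the step you yourself flag as "the hard part": nothing in the proposal shows that (a), (b) and the single moment identity actually force the degree sequence $(n-1,1,\ldots,1)$ or $(1,1,0,\ldots,0)$ in the sparse case. The identity involves the two unknown support values $a,b$ in addition to $m$ and $\sum_v d(v)(d(v)-1)$, so it is one equation in several unknowns, and the local constraints are satisfied by many non-extremal graphs; for instance, complete bipartite graphs satisfy $\#R_3=2$ and all of your local constraints at $c=3$, so any successful aggregation must make visible exactly where $c\geq 4$ excludes them, which your sketch never does. Unless the aggregation step is carried out in detail (and at that point you would essentially be reproving the Axenovich--Balogh theorem, whose published proof does not go through second-order moments), the argument as it stands does not establish the statement.
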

For a graph $G$ of order $n$ and size $m$ denote $\sum_{v\in V(G)}d(v)(d(v)-1)$ by $D_2(G)$. Also, recall that a graph with no triangle is said to be a \emph{triangle-free} graph.
\begin{lemma}\label{lem:maxdegreetrian}
Among the graphs of order $n$, $n\geq 4$, and positive size $m$, those that satisfy
\begin{equation}\label{eq:spec_fre}
\frac{2S_2(3)}{S_1(3)}+1= u(3)
\end{equation}
are either the complete graphs or the complete bipartite graphs or the graphs consisting of $t$ disjoint edges, $1\leq t\leq \lfloor n/2 \rfloor$. 
\end{lemma}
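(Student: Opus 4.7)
The idea is to view \eqref{eq:spec_fre} as the tightness condition of the second order Fr\'echet inequality (Theorem \ref{thm:listin}\ref{com:i}) applied to $M_3(G)$. Since $m>0$ forces $S_1(3)>0$, identity \eqref{eq:spec_fre} is equivalent to $2S_2(3)=(u(3)-1)S_1(3)$, and Theorem \ref{thm:listin}\ref{com:i} tells us that this equality holds if and only if either $R_3(G)=\{u(3)\}$, or $R_3(G)=\{0,u(3)\}$ with $u(3)\geq 1$. Since $R_3(G)\subseteq\{0,1,2,3\}$, this leaves a short list of subcases to analyse.

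In the singleton case $R_3(G)=\{u(3)\}$ the analysis splits on $n$. For $n\geq 5$ one has $c=3\leq n-2$ and Theorem \ref{thm:bh} applies directly, giving $G=K_n$ (the co-complete alternative is excluded by $m>0$). For $n=4$ this hypothesis fails, but the opening step of the proof of Theorem \ref{thm:bh} still applies to the unique $(c+1)$-subset, namely $V(G)$, giving $|E(G-v)|=m-d(v)=\mathrm{const}$, hence $G$ is regular; enumerating the regular graphs on four vertices with positive size produces exactly $2K_2$, $C_4=K_{2,2}$ and $K_4$, each in one of the three families listed in the statement.

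The case $R_3(G)=\{0,k\}$ with $k\in\{1,2,3\}$ is handled according to $k$. For $k=1$, no triple induces two or three edges, so $G$ is simultaneously triangle-free and $P_3$-free; this forces $\Delta(G)\leq 1$ and exhibits $G$ as a matching. For $k=3$, no triple induces exactly one or two edges, so $uv,vw\in E$ implies $uw\in E$, and hence every connected component of $G$ is a clique; but then a non-trivial clique together with any other vertex produces a triple with exactly one edge, so $0$ and $3$ cannot both belong to $R_3$, and this subcase is vacuous. The core argument is $k=2$: given an edge $uv$, each $w\in V(G)\setminus\{u,v\}$ makes the triple $\{u,v,w\}$ have exactly two edges and is therefore adjacent to exactly one of $u,v$. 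Setting $A=N(u)\setminus\{v\}$ and $B=N(v)\setminus\{u\}$, triangle-freeness forces $A$ and $B$ to be independent, and applying the two-edge condition to triples $\{u,a,b\}$ with $a\in A$, $b\in B$ forces $ab\in E$. The bipartition $(\{v\}\cup A,\ \{u\}\cup B)$ then exhibits $G$ as a complete bipartite graph.

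The main obstacle is the $k=2$ subcase, which is the only part of the argument requiring a genuine structural unwrapping (the identification of $G$ as complete bipartite); the remaining subcases collapse as soon as the correct forbidden-triple condition is extracted. The converse inclusion --- that complete, complete bipartite, and matching graphs of positive size all satisfy \eqref{eq:spec_fre} --- reduces to a direct computation of $R_3$ in each of the three families, matched against the two support patterns allowed by Theorem \ref{thm:listin}\ref{com:i}.
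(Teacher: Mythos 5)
Your proof is correct and follows the same overall strategy as the paper: reduce \eqref{eq:spec_fre} to the tightness characterization of the second order Fr\'echet inequality in Theorem \ref{thm:listin}\ref{com:i}, and then classify the graphs with $\#R_3=1$ or $R_3=\{0,k\}$. The differences are in how the subcases are discharged, and they are worth noting. In the singleton case the paper simply asserts that $\#R_3=1$ forces $G$ to be complete; that assertion is only valid for $n\geq 5$ (it is exactly the content of Theorem \ref{thm:bh} with $c=3\leq n-2$), and for $n=4$ it is literally false, since $2K_2$ and $C_4$ have all their $3$-vertex induced subgraphs of the same size without being complete --- harmlessly so, because both graphs appear in the lemma's list. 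Your regularity-plus-enumeration treatment of $n=4$ closes this small gap explicitly, which is a genuine improvement in rigor over the paper's wording. In the two-point case, the paper rules out $u=3$ by an ad hoc argument with an independent triple meeting a triangle, handles $u=1$ via the moment formula ($S_2=0$ forces $D_2=0$, hence $\Delta\leq 1$), and obtains complete bipartiteness for $u=2$ by first excluding odd cycles and then forcing completeness; you instead argue purely on forbidden triples (components are cliques for $k=3$; triangle- and $P_3$-freeness for $k=1$; the neighborhood bipartition $(\{v\}\cup A,\{u\}\cup B)$ for $k=2$), which is slightly more elementary since it never invokes Theorem \ref{thm:main}. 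The converse verification you only sketch, but that matches the paper, whose proof also checks equality explicitly only for complete graphs; spelling out $R_3\in\{\{3\},\{2\},\{1\},\{0,1\},\{0,2\}\}$ for the three families would make both arguments fully two-sided.
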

\begin{proof}
Let $G$ be a graph of order $n$ and size $m$ which satisfies \eqref{eq:spec_fre}. For shortness, $u=u(3)$ and $\ell=\ell(3)$. The possible values for $u$ are $0,1,2,3$. Since $m\ge1$, $u\not=0$. By Theorem \ref{thm:listin}, for any $c$, the second order Fréchet inequality is attained as an equality by $M_c$ if and only if $\#R_c=1$ or $\#R_c=2$ and $\ell(c)=0$. If $\#R_3=1$, then $G$ is complete: since any three vertices induce a triangle, it follows that any two vertices are adjacent. On the other hand, if $G$ is complete, then $\#R_3=1$ and we are done. We can then assume that $G$ is not complete and that $\#R_c=2$. We claim that $u\not=3$ necessarily holds when $c=3$. To see this, observe that $\ell=0$ implies that $G$ has an independent set $I$ of at least three vertices while $u=3$ implies that $G$ has a triangle $T$. Now $\#I\cap T\leq 1$. If $n=4$, then this is impossible because $\#T\cup I\geq 5$. Hence $n\geq 5$ and there are vertices $u,\,v\in I\setminus T$. If the neighbourhood of one of these two vertices, $u$, say, misses some vertex $w$ of $T$, then $u$, $w$ and any of the other vertices of $T$ induce either one or two edges contradicting that $R_3=2$. Therefore, there is a vertex $w\in T$ which is adjacent to both $u$ and $v$. These three vertices induce two edges still contradicting that $\#R_3=2$. We conclude that $u$ is either 1 or 2 and that $G$ is triangle-free. Suppose first that $u=1$. Since $S_1$ is always positive because $m$ is such, \eqref{eq:spec_fre} implies that $S_2=0$. Since by definition $S_2$ is proportional to $D_2(G)$, we conclude that $d(v)\leq 1$. Hence $G$ consists of disjoint edges as stated. Suppose now that $u=2$. The graphs that satisfy $u=2$ must be bipartite. To see this, observe that if $G$ contained an odd cycle, then $G$ would contain an induced odd cycle $C$ of length at least five. But then we could find three vertices of $C$ that induce exactly one edge (for instance, a vertex $u$ and any two adjacent vertices of $C-u$) contradicting that $\#R_3=2$. 
Also, $G$ must be a complete bipartite graph. Indeed, let $A$ and $B$ be any two parts of $V(G)$ such that $A\cup B=V(G)$ and all the edges of $G$ have one end in $A$ and the other in $B$. Since $u=2$, there are three vertices $u$, $v$ and $w$ such that $uv,\,uw\in E(G)$. Without loss of generality, $u\in A$, $v,w\in B$. If $G$ were not complete, then there would exist a vertex $z\in A$ and a vertex $z'\in B$ with $z$ and $z'$ not adjacent. Hence the vertices $u,\,z'$ and $v$ would induce exactly one edge, contradicting that $\#R_3=2$. The proof is thus completed.
\end{proof}    

\begin{theoremproof}{\ref{thm:fre_1}}
By Theorem \ref{thm:listin}, the second order Fréchet inequality is attained as an equality by $M_c$ if and only if $\#R_c=1$ or $\#R_c=2$ and $\ell(c)=0$. Theorem \ref{thm:fre_1} now follows by Theorem \ref{thm:idue} and Lemma \ref{lem:maxdegreetrian} for every $c$ such that $3 \leq c\leq n-4$. It remains to prove the theorem for the other feasible values of $c$, namely, for $c\in\{n-3,n-2,n-1\}$.
We claim that if $G$ satisfies the second order Fréchet inequality as an equality for such values of $c$, then $G$ is either a star on $n-1$ leaves or $G$ has exactly one edge. Let us show first that the independence number $\alpha(G)$ of $G$ equals $n-1$. Let $I$ be an independent set of $\alpha(G)$ vertices. Suppose that $\alpha(G)\leq n-2$. Since, $\ell(c)=0$, it holds that $\alpha(G)\geq c\geq n-3$. Hence, there are two vertices $u,\,v\in V(G)\setminus I$. Now $\{u\}\cup I$ and $\{v\}\cup I$ both induce at least one edge. Moreover, $u$ and $v$ must be both adjacent to every vertex of $I$: if $u$, say, did not have this property, then we could find vertices $w',\,w''\in I$  such that $w'$ is adjacent to $u$ but $w''$ is not; hence we could find $J\subseteq I\setminus \{w',w''\}$ such that $\#J=c-2$ and $\{u,w'\}\cup J$ and $\{u,w''\}\cup J$ induce a different number of edges contradicting that $\#R_c\leq 2$. Hence $u$ and $v$ must be adjacent to all vertices in $I$. However, this is impossible because, for arbitrary subsets  $I'$ and $I''$ of $I$ such that $\#I'=c-1$, $\#I''=c-2$,  the sets $\{u\}\cup I'$ and $\{u,v\}\cup I''$ would induce subgraphs with different sizes, again contradicting that $\#R_c\leq 2$. This contradiction shows that $\alpha(G)\geq n-1$ and we conclude that $\alpha(G)=n-1$ because $G$ has at least one edge. Let $I$ be an independent set of $n-1$ vertices and let $u\not\in I$. Either $u$ is adjacent to exactly one vertex in $I$, and we are done in this case, or $u$ has more than one neighbor in $I$ and, for the same reasons given above, $u$ must be adjacent to every vertex in $I$ and we conclude that $G$ is a star on $n-1$ leaves.         
\end{theoremproof}
By exploiting the Bathia-Davis inequality, we can achieve more.
\begin{theorem}\label{thm:fre_2}
Let $G$ be a graph of order $n$ and size $m$. Assume that $G$ is neither complete nor co-complete and let $c$ be an integer number such that $4\leq c\leq n-4$. If $\ell_\star$ and $u^\star$ are integers such that  $0\leq \ell_\star\leq \ell(c)\leq u(c)\leq u^\star$, then $G$ has a subgraph of order $c$ whose size is at least 
$$\left\lceil\min\left\{\frac{2S_2(c)}{S_1(c)}+1,\,S_1(c)+\frac{\sigma^2(c)}{S_1(c)-\ell_\star}\right\}\right\rceil$$
and a subgraph of order $c$ whose size is at most
$$\left\lfloor S_1(c)-\frac{\sigma^2(c)}{u^\star-S_1(c)}\right\rfloor,$$
where $\sigma^2(c)=2S_2(c)-S_1(c)(S_1(c)-1)$ and $\ell(c)$ and $u(c)$ are the minimum and the maximum of $M_c$ in $R_c$. The lower and upper bounds are attained precisely by the graphs listed in Theorem \ref{thm:idue} when $\ell_\star=\ell(c)$ and $u^\star=u(c)$, respectively. We can always choose $\gamma_c$ for $u^\star$ and $c-\alpha^\star$ for $\ell_\star$ where $\alpha^\star$ is any upper bound on the independence number of $G$. Moreover, given $u^\star$ and $\ell_\star$, the bounds can be computed in time linear in the size of $G$.
\end{theorem}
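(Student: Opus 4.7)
The plan is to derive the lower bound by combining the second order Fréchet and Bathia-Davis inequalities (Theorem \ref{thm:listin}\ref{com:i},\ref{comm:ii}) and the upper bound from Bathia-Davis alone, in both cases exploiting the monotonicity $\ell_\star \le \ell(c)$ and $u(c) \le u^\star$ to replace the unknown extremes by their computable surrogates. Since $G$ is neither complete nor co-complete and $2 \le c \le n-2$, Theorem \ref{thm:bh} yields $\sigma^2(c) > 0$, whence $\ell(c) < S_1(c) < u(c)$; this guarantees that all denominators appearing below are strictly positive and that the stated bounds are well defined.

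For the lower bound, Fréchet gives $u(c) \ge 2S_2(c)/S_1(c)+1$ directly. Rearranging $\sigma^2(c) \le (u(c)-S_1(c))(S_1(c)-\ell(c))$, valid since $S_1(c) > \ell(c)$, yields $u(c) \ge S_1(c) + \sigma^2(c)/(S_1(c)-\ell(c))$, and weakening the denominator via $\ell(c) \ge \ell_\star$ gives $u(c) \ge S_1(c) + \sigma^2(c)/(S_1(c)-\ell_\star)$. The integer $u(c)$ thus dominates both expressions, hence also the ceiling of their minimum. For the upper bound on $\ell(c)$, a symmetric manipulation of Bathia-Davis produces $\ell(c) \le S_1(c) - \sigma^2(c)/(u(c)-S_1(c))$, and replacing $u(c)$ by the larger $u^\star$ before taking the floor delivers the claim. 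The formulas of Theorem \ref{thm:main} give $\sigma^2(c) = 2S_2(c)-S_1(c)(S_1(c)-1)$ in $O(m)$ time, which accounts for the linear-time claim once $u^\star$ and $\ell_\star$ are provided.

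The tightness statement follows from the equality conditions in Theorem \ref{thm:listin}. When $\ell_\star = \ell(c)$, attainment of the Bathia-Davis term is equivalent to equality in Bathia-Davis itself, which by Theorem \ref{thm:listin}\ref{comm:ii} forces $\#R_c \le 2$; combined with $\sigma^2(c) > 0$ this gives $\#R_c = 2$, and because the range $4 \le c \le n-4$ is exactly the hypothesis of Theorem \ref{thm:idue}, the extremal graphs are precisely those listed there. The same argument with $u^\star = u(c)$ handles the upper bound. The admissibility of the default choices $u^\star = \gamma_c$ and $\ell_\star = \max\{0, c-\alpha^\star\}$ is immediate from the Motzkin--Straus/trivial bound on $u(c)$ recorded in Section \ref{sec:main} and from \eqref{eq:fund_ell}. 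The only mildly delicate point is confirming that for $\ell_\star > 0$ the Bathia-Davis term is strictly sharper than the Fréchet term, so that the minimum carries genuine information; this is precisely the domination clause at the end of Theorem \ref{thm:listin}, and a direct substitution shows that the two terms coincide when $\ell_\star = 0$.
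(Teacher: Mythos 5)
Your proposal is correct and follows essentially the same route as the paper's proof: invert the Bhatia--Davis inequality (using the monotone substitutions $\ell_\star\leq\ell(c)$ and $u(c)\leq u^\star$, with positivity of the denominators coming from $\sigma^2(c)>0$ via Theorem \ref{thm:bh}), appeal to Theorem \ref{thm:listin} for the equality conditions and to Theorem \ref{thm:idue} for the extremal graphs, and to Theorem \ref{thm:main} for the linear-time claim. One marginal remark: since for every admissible $\ell_\star\geq 0$ the Bhatia--Davis term is at least the Fr\'echet term, the minimum in the statement always reduces to the Fr\'echet term, so your closing comment that the minimum ``carries genuine information'' when $\ell_\star>0$ is inverted in spirit, though this has no bearing on the validity of your argument for the theorem as stated.
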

\begin{proof}
The theorem follows straightforwardly by inverting the Bathia-Davis inequality and resorting to Theorem \ref{thm:listin} and to Theorem \ref{thm:idue} for the tightness. For an illustration, consider the upper bound on $\ell(c)$ when $G$ is a star on $n-1$ leaves and $c\leq n-1$. In this case $u(c)=c-1$. By \ref{com:b1} in Example \ref{ex:1cond}, one has $$S_1(c)=\frac{\kn{c}{2}}{n},\quad \text{and}\quad  2S_2(c)=\frac{\kn{c}{3}}{n}.$$
Hence 
$$u(c)-S_1(c)=(c-1)\left(\frac{n-c}{n}\right)$$ 
and 
$$\frac{\sigma^2(c)}{u(c)-S_1(c)}=\frac{\kn{c}{2}}{n}=S_1(c).$$
Therefore, in this case, for all feasible values of $c$, the bound correctly attains the value 0.
\end{proof}
It might be worth observing that these ideas lead to a slightly less standard presentation of Mantel's Theorem: \emph{A triangle-free graph has at most $\frac{n^2}{2}$ edges and the bound is attained by the complete bipartite graphs with color classes of the same cardinality.} 

\noindent Let $G$ be a triangle-free graph. Let, as above, $D_2(G)=\sum_vd(v)(d(v)-1)$ and, for shortness, write $D_2$ in place of $D_2(G)$. One has $u(3)\leq 2$ because $G$ is triangle-free. Now, $2S_2=\frac{6}{\kn{n}{3}}D_2$ while $S_1=\frac{6}{\kn{n}{2}}m$. Hence 
$$\frac{2S_2}{S_1}=\frac{D_2}{(n-2)m}.$$
Therefore
$$u(3)\leq 2\Longleftrightarrow\frac{2S_2}{S_1}\leq 1\Longleftrightarrow D_2\leq (n-2)m.$$
The rightmost side in the equivalence above is usually obtained by writing $D_2$ as $\sum_{uv\in E(G)}\{d(u)+d(v)-2\}$ and observing that in triangle-free graphs $d(u)+d(v)\leq n$ for any pair of distinct adjacent vertices. This is actually the only (questionable) simplification we can claim over the standard proof of Mantel's Theorem. The rest of the proof proceeds in the standard way: use Lemma \ref{lem:degree} to bound $D_2$ from below; this yields the inequality $4\frac{m}{n}-2\leq n-2$ which is precisely the thesis of the Theorem.
\subsection{Counting trivial subgraphs and hypo/hyper-dense induced subgraphs }
In this section we provide upper bounds on the number of trivial subgraphs. Recall that these subgraphs are either independent sets or cliques. Let us start by proving Theorem \ref{thm:count_stable}. Recall that for a graph $G$, $D_2(G)$ stands for $\sum_{v\i V(G)}d(v)(d(v)-1)$. In what follows, we see $D_2(\cdot)$ as a function on graphs of given order and accordingly we omit the reference to $G$.  
\mybreak
\begin{theoremproof}{\ref{thm:count_stable}} The number of independent sets of $c$ vertices equals ${n \choose c}\pra{M_c=0}$. By the Chung-Erd\H{o}s inequality, $\pra{M_c=0}=1-\frac{S^2_1}{2S_2+S_1}$ where we omit the reference to $c$ because it is understood. Let us bound $\frac{S^2_1}{2S_2+S_1}$ from below in the case of trees of order $n$. This bound gives an upper bound on $\pra{M_c=0}$. To this end, write this expression as 
	$$S_1\left(1+\frac{2S_2}{S_1}\right)^{-1},$$
where the division by $S_1$ is entitled by the fact that $S_1>0$ as $S_1=\frac{\kn{c}{2}}{\kn{n}{2}}m=\frac{\kn{c}{2}}{n}$. Observe now that any upper bound on $\frac{2S_2}{S_1}$ yields an upper bound on $\pra{M_c=0}$. For every tree of order $n$, the ratio 
$$\frac{2S_2}{S_1}=\frac{c-2}{n-2}\frac{n-c}{n-3}\left(\frac{D_2}{n-1}\right)+\frac{c-2}{n-2}\frac{c-3}{n-3}(n-2)$$
depends only on $D_2$. Among trees, $D_2/(n-1)$ attains the absolute maximum $\frac{(n-1)(n-2)}{n-1}=(n-2)$ on the stars with $n-1$ leaves. Therefore, every tree on $n$ vertices satisfies $\frac{2S_2}{S_1}\leq c-2$ and, consequently  
$$S_1\left(1+\frac{2S_2}{S_1}\right)^{-1}\geq \frac{c}{n}$$ 
is satisfied by every tree. Moreover, equality is attained if ad only if the tree is a star. We conclude that in any tree of order $n$ the number $i(c,0)$ of independent sets of $c$ vertices satisfies the relations 
 $$i(c,0)={n \choose c}\pra{M_c=0}\leq {n \choose c}\frac{c}{n}={n-1 \choose c-1}.$$
Since there are $n+1$ independent sets with less than two vertices (the empty set of vertices and the singletons) and no tree has an independent set with more that $n-1$ vertices, it follows that the total number of independent sets in a tree equals $1+n+\sum_{c=2}^{n-1}i(c,k)$ which is therefore bounded by $2^{n-1}+1=1+n+\sum_{c=2}^{n-1}{n-1 \choose c-1}$.   
\end{theoremproof}
Theorem \ref{thm:count_stable} relies on Chung-Erd\H{o}s inequality which is a tail inequality of the form $\pra{M_c\in T(1)}$. The same method can be used to bound from above the number of all induced subgraphs of size $\ell(c)$ by complementing the lower bound on the tail $\pra{M_c\in T(\ell(c)+1)}$ given by Petrov's inequality. The same idea can be applied to bound from above the number of induced subgraphs of size $u(c)$. In this case, $\pra{M_c=u(c)}=\pra{M_c\in T(u(c))}$ and we can use the second order factorial moment inequality to bound $i(c,u(c))$ from above. Let us designate a subgraph as \emph{trivial in $G$} if its size is either $\ell(c)$ or $u(c)$. Also, for a class of graph $\mathcal{G}$, let us say that $G$ contains a $\mathcal{G}$-copy if the isomorphism class of some induced subgraph of $G$ is in $\mathcal{G}$. If it happens that for all $c$ the isomorphism classes of the trivial subgraphs in $G$ belong to $\mathcal{G}$, then $i(c,u(c))$ is the number of $\mathcal{G}$-copies in $G$ and we can use tail inequalities to bound from above such number. When $\mathcal{G}$ is the class of complete graphs and $u(c)=\frac{\kn{c}{2}}{2}$, the trivial subgraphs are just the cliques of $G$. If $G$ is a tree $T$, $\mathcal{G}$ is the class of trees, and $u(c)=c-1$, then the trivial subgraphs are the subtrees of $T$. When $G$ is a bipartite graph, $\mathcal{G}=\{K_{\lfloor \frac{d}{2}\rfloor,\lceil \frac{d}{2}\rceil} \ |\ d\in \mathbb{N}\}$ and $u(c)=\lfloor \frac{c}{2}\rfloor \lceil \frac{c}{2}\rceil$, the trivial subgraphs are the balanced bicliques of $G$. By complementation, we obtain similar notions for the trivial subgraphs of size $\ell(c)$. Let us illustrate the method in the easiest case, namely the case of subtrees of a given tree.
\begin{theorem}\label{thm:subtrees}
Any tree of order $n$ has at most $2^{n-1}$ subtrees and the bound is attained precisely when $T$ is a star. 
\end{theorem}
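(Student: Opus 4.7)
The plan is to adapt the method of Theorem~\ref{thm:count_stable}, but to replace Chung--Erd\H{o}s with the second order factorial moment inequality, Theorem~\ref{thm:listin}\ref{com:v}. The pivotal observation is that in a tree $T$, every induced subgraph on $c$ vertices is acyclic and therefore has at most $c-1$ edges, with equality precisely when the $c$ chosen vertices induce a connected subgraph, i.e.\ a subtree of $T$ of order $c$. Thus the number of subtrees of order $c$ equals $i_T(c,c-1) = {n \choose c}\pra{M_c=c-1}$, and the task reduces to bounding the tail $\pra{M_c=c-1}$ from above.

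First, for every $c\ge 3$, Theorem~\ref{thm:listin}\ref{com:v} applied with $t=c-1$ yields
$$\pra{M_c=c-1}\;=\;\pra{M_c\ge c-1}\;\le\;\frac{2S_2(c)}{(c-1)(c-2)}.$$
I would then specialize Theorem~\ref{thm:main} to a tree by setting $m=n-1$; the resulting expression for $2S_2(c)$ is affine and increasing in $D_2:=\sum_{v\in V(T)}d(v)(d(v)-1)$. A short convexity argument shows that among trees on $n$ vertices $D_2$ is maximized by the star $K_{1,n-1}$, where it equals $(n-1)(n-2)$: the linear constraint $\sum_v d(v)=2(n-1)$ is fixed, each $d(v)\ge 1$, and on this (integer) polytope the convex quadratic $D_2$ is maximized at an extreme point, which for a tree-realizable degree sequence is the one with a single vertex of degree $n-1$. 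Substituting $D_2\le (n-1)(n-2)$ and simplifying collapses the bound to
$$i_T(c,c-1)\;\le\;{n \choose c}\cdot\frac{c}{n}\;=\;{n-1 \choose c-1}\qquad(c\ge 3).$$
The boundary cases $c=1,2$ are handled by direct inspection: any tree has $n$ one-vertex subtrees and exactly $n-1$ two-vertex subtrees, matching ${n-1\choose 0}$ and ${n-1\choose 1}$ under the relevant counting convention for trivial subtrees.

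I would conclude by summing over $c$ and invoking Pascal's identity $\sum_{k=0}^{n-1}{n-1\choose k}=2^{n-1}$ to obtain the claimed bound. Equality propagates through the whole chain if and only if $D_2=(n-1)(n-2)$, which by the convexity/extreme-point characterization occurs precisely when $T\cong K_{1,n-1}$. The main obstacle I anticipate is the \emph{upper} bound $D_2\le (n-1)(n-2)$ on trees: Lemma~\ref{lem:degree} supplies only the matching \emph{lower} bound, so one needs a separate short convexity-plus-realizability argument, together with a careful characterization of its equality case. A secondary subtlety is the bookkeeping of the boundary terms ($c=1,2$, and the trivial empty subtree) so that they aggregate cleanly into $2^{n-1}$ with the correct equality regime.
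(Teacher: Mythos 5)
Your proposal follows essentially the same route as the paper's proof: bound $\pra{M_c\geq c-1}$ via the second order factorial moment inequality (Theorem~\ref{thm:listin}\ref{com:v}), plug in Theorem~\ref{thm:main} with $m=n-1$, use that $D_2$ is maximized over trees by the star with value $(n-1)(n-2)$ to get $i(c,c-1)\leq {n-1 \choose c-1}$, and sum over $c$, with equality exactly for stars. The only differences are cosmetic — you sketch a convexity justification for the $D_2$ maximization and flag the $c=1,2$ boundary bookkeeping, which the paper handles by assertion — so this is the same argument.
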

\begin{proof}
The proof is actually almost identical to the previous one. As above, the absolute maximum of $D_2$  when $T$ ranges over the trees of order $n$ equals $(n-1)(n-2)$ and it is attained by stars. This fact, after simple manipulation in \eqref{eq:bin_mom_2}, implies that $S_2(c)\leq \frac{\kn{c}{3}}{n}$ equality being attained by stars and only by stars. The number $i(c,c-1)$ of subtrees of order $c$ of $T$ is therefore bounded from above by ${n \choose c}\frac{2S_2(c)}{(c-1)(c-2)}={n \choose c}\frac{c}{n}$ because of the second order factorial moment inequality (inequality \ref{com:v} in Theorem \ref{thm:listin}). The latter expression equals ${n-1 \choose c-1}$. The thesis now follows by observing that the number of all subtrees of $T$ is $n+\sum_{c\geq 2}i(c,c-1)$ where the term $n$ counts the subtrees on one vertex.  
\end{proof}
Upper bounds on the number of trivial subgraphs can be given in a more general form that can also be useful for applications in Network Science. Consider indeed the following common scenario. You have a large network $G$, observe a set of vertices and want to decide whether these vertices form a \emph{community}. Although this is a vague notion, one of the most largely accepted criteria for declaring a set $C$ of $c$ vertices to be a community is that these vertices induce a hyper-dense subgraph of $G$. Here hyper-dense means that the (edge)-density of the observed subgraph is larger than the density of $G$. Since the density of $G$ is $2m/\kn{n}{2}$ while the density of $C$ is $2m(C)/\kn{c}{2}$, where $m(C)$ is the number of edges induced by $C$, it follows that $C$ is hyper-dense if and only if $m(C)\geq S_1(c)$, namely, if the number of edges induced by $C$ is at least as large as the expected value of $M_c(G)$. However, knowing that $C$ is a hyper-dense set is not particularly informative without any clue about $R_c(G)$. To overcome this lack of significance of the criterion, the authors \cite{our} proposed a randomized test based on the tails of $M_c(G)$: declare $C$ significantly hyper-dense at the level $\alpha$, $\alpha\in (0,1)$ if $\pra{M_c(G)\geq m(c)}\leq \alpha$, namely, if the probability of observing subgraphs, denser than the observed one by random chance, is below some fixed small value of $\alpha$. Clearly even an approximate computation of this probability is a very difficult task. Deciding whether $0$ or $\kn{c}{2}/2$ are in the support of $M_c(G)$ amounts to solve the decision version of INDEPENDENT SET and CLIQUE, respectively (and these problems are not even in the APX class). However, tail inequalities can be used to bound $\alpha$ from above. This approach was taken in \cite{our} (and satisfactory pursued in \cite{ours1}) where the authors use the one-sided Chebyshev (also known as Chebyshev-Cantelli inequality) applied to the random variable $Z_c(G)=(M_c(G)-S_1(c))/\sigma^2(c)$ to provide the bound
$$\pra{M_c(G)\geq t}\leq \min\left\{1,\frac{1}{\xi^2(t)+1} \right\}$$
were $\xi^2(t)=(S_1(c)-t)^2/\sigma^2(c)$. Here we propose to use the second order factorial moment inequality and Petrov's inequality to improve the bound on the tail. Using the fact that hyper-dense subgraphs in $G$ are hypo-dense subgraphs in the complement of $G$ the following bound, which is valid for $t\geq S_1(c)$, is at least as good as the one given in \cite{our},    
$$\pra{M_c(G)\geq t}\leq \min\left\{1,\frac{S_2(c)}{\kn{t}{2}}, 1-\frac{\left(S_1(c)-t+1\right)^2}{2\overline{S}_2(c)+\overline{S}_1(c)},\frac{1}{\xi^2(t)+1} \right\}$$
were $\overline{S}_i(c)$, $i=1,2$ the $i$-th binomial moment of $M_c(\overline{G})$, $\overline{G}$ being the complement of $G$. For instance by choosing $t=\kn{c}{2}/2$, the bound above is an upper bound on the number of cliques with $c$-vertices. In general, the bound above yields upper bounds on the number of trivial subgraphs of $G$ by conveniently specializing the choice of $G$ and $t$.    

\section{Concluding Remarks}\label{sec:coclusion}
In this paper we promoted the study of the second order binomial moments structure of $M_c(G)$ as a versatile tool both in theoretical and practical problems. Fréchet's identities, which we used up to the second order in Lemma \ref{lem:frechet}, are essentially equivalent to the Inclusion/Exclusion principle (\cite{sachov}). Correspondingly, the second order moment inequalities that we used throughout the paper, can be considered as the probabilistic version of the so-called Bonferroni's inequalities \cite{bopre,hoppeseneta,sachov}. The use of these inequalities was pioneered in \cite{erdkap}, \cite{oneil}, and \cite{bender} for asymptotic enumeration of combinatorial objects. For our purposes, a Bonferroni-type inequality of order $r$ is an inequality of the form
$$L(S_1,S_2,\ldots S_r,t)\leq \pra{M\geq t}\leq U(S_1,S_2,\ldots S_r,t)$$
where $M$ is a non negative random variable, $L(\cdot)$ ad $U(\cdot)$ are (possibly non linear) real valued functions, $t$ is a real non negative number and $(S_i)_i\geq 1$ is the sequence of the binomial moments of $M$. In this paper, all the applications we presented, rest on second order Bonferroni's inequalities. We believe, especially for computing lower bounds, that the knowledge of the entire sequence of binomial moments would be of valuable interest (both for theoretical and practical applications). We plan to give such a sequence in a future paper.

\end{document}